\documentclass{amsart}
\usepackage{graphicx} 
\usepackage{amsfonts}
\usepackage{amssymb}
\usepackage{amscd}
\usepackage[latin1]{inputenc}
\usepackage{indentfirst}
\usepackage{xcolor}
\usepackage{eufrak}
\usepackage{tikz}
\usepackage[normalem]{ulem}

\newtheorem{theorem}{Theorem}
\newtheorem{lemma}[theorem]{Lemma}
\newtheorem{remark}[theorem]{Remark}
\newtheorem{corollary}[theorem]{Corollary}
\newtheorem{proposition}[theorem]{Proposition}
\newtheorem{example}[theorem]{Example}

\title{Algebraic Lattices Arising from Congruence Submodules in Subfields of $p$-th Cyclotomic Fields}

\author{T. P. N. Neto}
\address{S\~ao Paulo State University, S\~ao Jos\'e do Rio Preto-SP, Brazil}
\email{trajano.nobrega@unesp.br}

\author{A. A. de Andrade}
\address{S\~ao Paulo State University, S\~ao Jos\'e do Rio Preto-SP, Brazil}
\email{antonio.andrade@unesp.br}

\author{J. L. R. Bastos}
\address{S\~ao Paulo State University, S\~ao Jos\'e do Rio Preto-SP, Brazil}
\email{jefferson.bastos@unesp.br}

\author{R. R. de Araujo}
\address{Federal Institute of S\~ao Paulo, Catanduva-SP, Brazil}
\email{robson.ricardo@ifsp.edu.br}

\author{J. C. Interlando}
\address{San Diego State University}
\email{interlan@sdsu.edu}

\begin{document}

\maketitle

\begin{abstract} The classical sphere packing problem, which remains unsolved, consists of determining how densely a large number of identical spheres can be packed together. In some sphere packings, the centers of the spheres in a sphere packing form a Euclidean lattice, which is a discrete additive subgroup of $\mathbb{R}^n$. Free $\mathbb{Z}$-modules in the ring of integers of an algebraic number field yield algebraic lattices via the canonical embedding. In this work, we present new constructions of algebraic lattices from certain families of $\mathbb{Z}$-modules in the ring of algebraic integers of subfields of the $p$-th cyclotomic field, where $p$ is a prime number. Within this framework, we compute lower bounds for the center density of these algebraic lattices and construct algebraic lattices having the best known packing density in dimensions 2, 3, and 5.
\end{abstract}

\section{Introduction} \label{sec-1}

The classical sphere packing problem seeks to determine the maximum density with which identical non-overlapping $n$-dimensional spheres can be packed in Euclidean space $\mathbb{R}^n$. Unlike space-filling solids such as hypercubes, spherical geometries inherently leave unoccupied space between adjacent spheres. A fundamental approach to constructing dense packings relies on Euclidean lattices, where sphere centers form a discrete additive subgroup of $\mathbb{R}^n$. In this context, rings of integers of algebraic number fields offer a structured algebraic framework to construct lattices via canonical embeddings. Extending this line of research, the primary objective of this work is to present new explicit constructions of algebraic lattices originating from congruence submodules in subfields of $p$-th cyclotomic fields, establish lower bounds for their center density, and recover optimal packing configurations in low dimensions.

A way to provide a sphere packing is using lattices. A (Euclidean) lattice is the set given by all $\mathbb{Z}$-linear combinations of some linearly independent vectors fixed in the space. The packing density of a lattice is the proportion of the $\mathbb{R}$-vector space generated by the lattice which is covered by the non-overlapping spheres of maximum radius centered at the points of lattice. The densest possible lattice packings have only been determined in dimensions $1$, $2$, $3$, $8$, and $24$ \cite{sloane,maryna}.

In telecommunications, usually the problem of finding good signal constellations for a Gaussian channel is associated with the search for lattices with high packing density. Algebraic number theory has been useful to the development of the theory of Euclidian lattices and has become of great interest for designing dense signal constellations well suited for transmission over AWGN channel. Furthermore, algebraic number theory has proven to be a useful mathematical tool in the search for lattices which can be the support for designing codes for both Gaussian and Rayleigh channels \cite{andrade-1,oggier-1,boutros,bottcher,oggier-2}.

Let $B=\{u_1,u_2,\ldots,u_n\}$ be a set with $n$ linearly independent vectors in $\mathbb{R}^n$. The set of all $\mathbb{Z}$-linear combinations of $B$ is defined to be a (full-rank) lattice with basis $B$. The matrix $G=\left[u_i\cdot u_j\right]_{n\times n}$ is called a Gram matrix of $\Lambda$. The determinant of $G$ is called the determinant of $\Lambda$ and its square root is the volume of $\Lambda$, denoted by $\textrm{vol}(\Lambda)$. The packing radius of $\Lambda$ is computed by $\rho:=(1/2)\min\{\|u\|~:~u\in\Lambda\setminus\{0\}\}$. The center density of $\Lambda$ is defined to be $\delta(\Lambda)=\rho^n/\textrm{vol}(\Lambda)$. The higher the center density, the denser the lattice packing realized by $\Lambda$, which shows the importance of this measure. For more details on lattices, see, e.g., \cite{costa,sloane}.



Algebraic number theory, particularly through cyclotomic fields and their subfields, has provided a rich framework for constructing lattices with high packing density \cite{jorge-1,oggier-1,bayer-1}. Building upon computational and practical constructions of lattices from cyclic extensions presented in \cite{interlando-1}, the primary contribution and novelty of this work rely on establishing a complete theoretical foundation and analytical generalization for these structures. Specifically, we present explicit constructions for two parametric families of $\mathbb{Z}$-modules, denoted by $\mathcal{M}_p$ and $\tilde{\mathcal{M}}_p$, within the ring of algebraic integers $\mathcal{O}_{\mathbb{K}}$ of a subfield $\mathbb{K} \subseteq \mathbb{Q}(\zeta_p)$ of degree $s$. Our main contribution is the systematic derivation of these submodules using algebraic orthogonality relations in the finite vector space $\mathbb{F}_p^s$ alongside suitable primitive roots modulo $p$. By rigorously analyzing the trace form restricted to these submodules, we compute their exact algebraic indices $[\mathcal{O}_{\mathbb{K}} : \mathcal{M}_p]$ and $[\mathcal{O}_{\mathbb{K}} : \tilde{\mathcal{M}}_p]$, prove closed-form lower bounds for the center density of the resulting algebraic lattices across generic dimensions $s$, and explain the theoretical mechanism by which these structured module constructions systematically recover the best known packing densities in low dimensions, specifically in dimensions $2, 3$, and $5$.

This work is organized as follows. In Section 2, we characterize the subfields of $\mathbb{Q}(\zeta_p)$ and explicitly describe their rings of algebraic integers. In Section 3, we present the trace form associated with these rings, which is used to compute the center density of the corresponding algebraic lattices. In Section 4, we summarize some basic facts about general algebraic lattices. Finally, in Section 5, we provide the constructions of algebraic lattices coming from the proposed $\mathbb{Z}$-modules $\mathcal{M}_p$ and $\tilde{\mathcal{M}}_p$, compute their indices, establish center density lower bounds, and highlight the optimal cases.


\section{Subfields of the $p$-th cyclotomic field} \label{sec-2}

Let $\mathbb{L}=\mathbb{Q}(\zeta_p)$ be the $p$-th cyclotomic field, where $p$ is a prime number. It is well known that, for every divisor $s$ of $p-1$, there exists a unique subfield $\mathbb{K}\subseteq \mathbb{L}$
with $[\mathbb{K}:\mathbb{Q}]=s$, by the Galois correspondence. Moreover, the rings of algebraic integers of $\mathbb{L}$ and of its maximal real subfield $\mathbb{Q}(\zeta_p+\zeta_p^{-1})$ are
\[ \mathcal{O}_{\mathbb{L}}=\mathbb{Z}[\zeta_p] \quad\text{and}\quad \mathcal{O}_{\mathbb{Q}(\zeta_p+\zeta_p^{-1})} =\mathbb{Z}[\zeta_p+\zeta_p^{-1}], \]
respectively \cite{wash}. Consequently, \[ \{1,\zeta_p,\zeta_p^2,\ldots,\zeta_p^{p-2}\} \ \ \mbox{and} \ \ \{1,\zeta_p+\zeta_p^{-1},\ldots,\zeta_p^{\frac{p-3}{2}}+\zeta_p^{-\frac{p-3}{2}}\} \]
are integral bases of $\mathbb{L}$ and $\mathbb{Q}(\zeta_p+\zeta_p^{-1})$, respectively.

Let $\mathbb{K}$ be an arbitrary subfield of $\mathbb{L}$, and write \[ s=[\mathbb{K}:\mathbb{Q}] \ \ \mbox{and} \ \ r=[\mathbb{L}:\mathbb{K}]=\frac{p-1}{s}. \] Then both $r$ and $s$ divide $p-1$. The Galois group
$G=\operatorname{Gal}(\mathbb{L}/\mathbb{Q})$ is canonically isomorphic to the multiplicative group $\mathbb{Z}_p^{\star} =(\mathbb{Z}/p\mathbb{Z})^{\star}$,
which is cyclic of order $p-1$. Let $\bar{u}$ be a generator of $\mathbb{Z}_p^{\star}$, and let $\sigma_u$ denote the automorphism of $\mathbb{L}$ determined by $\sigma_u(\zeta_p)=\zeta_p^{\,u}$.
Then \[ G=\langle\sigma_u\rangle = \{\sigma_u^0,\sigma_u,\sigma_u^2,\ldots,\sigma_u^{p-2}\}. \] Since every subgroup of a cyclic group is cyclic, both extensions $\mathbb{Q}\subseteq\mathbb{K}$ and $\mathbb{K}\subseteq\mathbb{L}$ are cyclic Galois extensions.

\begin{proposition} If $H=\operatorname{Gal}(\mathbb{L}/\mathbb{K}) = \langle \sigma_{u}^s\rangle$, then $\mathbb{K}=\mathbb{Q}(t)$, where
\[ t = \operatorname{Tr}_{\mathbb{L}/\mathbb{K}}(\zeta_p) = \sum_{j=0}^{r-1}\zeta_p^{\,u^{js}}. \]
\end{proposition}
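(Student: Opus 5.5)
The plan is to use the Galois correspondence together with the linear independence of the conjugates of $\zeta_p$. First I show $t\in\mathbb{K}$. Since $H=\langle\sigma_u^s\rangle$ has order $r$, its elements are $\sigma_u^{js}$ for $0\le j\le r-1$. Moreover $\sigma_u^{js}(\zeta_p)=\zeta_p^{u^{js}}$, so
\[
t=\sum_{\tau\in H}\tau(\zeta_p)=\operatorname{Tr}_{\mathbb{L}/\mathbb{K}}(\zeta_p).
\]
Applying any $\sigma\in H$ only permutes the summands, so $t$ is fixed by $H$. Hence $t\in\mathbb{L}^H=\mathbb{K}$, and $\mathbb{Q}(t)\subseteq\mathbb{K}$.

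For the reverse inclusion, by the Galois correspondence it suffices to show that the stabilizer of $t$ in $G$ is exactly $H$. Then $\operatorname{Gal}(\mathbb{L}/\mathbb{Q}(t))=H$, so $\mathbb{Q}(t)=\mathbb{L}^H=\mathbb{K}$. Equivalently, I can show that the $s$ elements $\sigma_u^i(t)$, $0\le i\le s-1$, are pairwise distinct; then $[\mathbb{Q}(t):\mathbb{Q}]\ge s$ and the result follows from $\mathbb{Q}(t)\subseteq\mathbb{K}$.

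The key step is computing
\[
\sigma_u^i(t)=\sum_{j=0}^{r-1}\zeta_p^{u^{i+js}}.
\]
Its exponents form the coset $u^iH'$, where $H'=\langle u^s\rangle\le\mathbb{Z}_p^{\star}$. For $0\le i\le s-1$ these cosets are pairwise disjoint. I then use that $\{\zeta_p,\zeta_p^2,\ldots,\zeta_p^{p-1}\}$ is a $\mathbb{Q}$-basis of $\mathbb{L}$. This follows from the integral basis $\{1,\zeta_p,\ldots,\zeta_p^{p-2}\}$ recalled above: multiplying by $\zeta_p$ gives $\{\zeta_p,\ldots,\zeta_p^{p-1}\}$, which is again a basis. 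In this basis each $\sigma_u^i(t)$ has coefficient $1$ exactly on the coordinates indexed by the coset $u^iH'$ and $0$ elsewhere. Because the cosets are disjoint, the elements $\sigma_u^i(t)$ are distinct, and in fact linearly independent over $\mathbb{Q}$. These $\sigma_u^i(t)$ are $s$ distinct conjugates of $t$, so the minimal polynomial of $t$ has degree at least $s$.

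I expect the only delicate point to be justifying the linear independence of $\zeta_p,\ldots,\zeta_p^{p-1}$. In particular, $1$ must not appear as one of the exponents in a way that breaks independence, and it does not, because all exponents $u^{i+js}$ are nonzero mod $p$. The rest is the routine Galois correspondence.
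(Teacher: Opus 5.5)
Your proof is correct and follows essentially the same route as the paper. You show $t\in\mathbb{L}^H=\mathbb{K}$, then use that the exponents of $\sigma_u^i(t)$ form the pairwise distinct cosets $u^iH$ to obtain $s$ distinct conjugates, so $[\mathbb{Q}(t):\mathbb{Q}]\ge s$. Your explicit appeal to the $\mathbb{Q}$-basis $\{\zeta_p,\ldots,\zeta_p^{p-1}\}$ supplies the justification the paper leaves implicit when it passes from ``distinct cosets'' to ``distinct conjugates.''
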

\begin{proof} Since $H=\operatorname{Gal}(\mathbb{L}/\mathbb{K})$, the trace \[ t = \operatorname{Tr}_{\mathbb{L}/\mathbb{K}}(\zeta_p) = \sum_{\tau\in H}\tau(\zeta_p) \]
is fixed by every element of $H$. Hence $t\in\mathbb{L}^H=\mathbb{K}$. Let $ G=\operatorname{Gal}(\mathbb{L}/\mathbb{Q}) = \langle\sigma_u\rangle$, where $\sigma_u(\zeta_p)=\zeta_p^{\,u}$. Since
$H=\langle\sigma_{u}^s\rangle$,  its elements are \[ H = \{\sigma_u^0,\sigma_{u}^s,\sigma_{u}^{2s},\ldots,\sigma_{u}^{(r-1)s}\}. \] Therefore, $t = \zeta_p+\zeta_p^{u^s}
+\zeta_p^{u^{2s}} +\cdots+\zeta_p^{u^{(r-1)s}}$. For each integer $i$, with $0\le i\le s-1$, \[ \sigma_u^{\,i}(t) = \sum_{j=0}^{r-1} \zeta_p^{\,u^{\,i+js}}. \]
The exponents $u^{i},u^{i+s},\ldots,u^{i+(r-1)s}$ are precisely the elements of the coset $u^{i}H$ of $H$ in $G$. Since the cosets $H,uH,\ldots,u^{s-1}H$ are distinct, the conjugates
$t,\sigma_u(t),\ldots,\sigma_u^{\,s-1}(t)$ are pairwise distinct. Hence the orbit of $t$ under $G$ has cardinality $s$, so that
$[\mathbb{Q}(t):\mathbb{Q}] =s$. Since $t\in\mathbb{K}$, it follows that $\mathbb{Q}(t)\subseteq\mathbb{K}$. Finally, $
[\mathbb{Q}(t):\mathbb{Q}] = [\mathbb{K}:\mathbb{Q}] = s$, which implies $\mathbb{Q}(t)=\mathbb{K}$. Thus 
\[ t = \operatorname{Tr}_{\mathbb{L}/\mathbb{K}}(\zeta_p) = \sum_{j=0}^{r-1}\zeta_p^{\,u^{js}} \] is a primitive generator of the field $\mathbb{K}$.
\end{proof}

\begin{proposition} \label{lem-1} If $\zeta_p^{\,u^{i_1s+j_1}} = \zeta_p^{\,u^{i_2s+j_2}}$, where $1\le i_1,i_2\le r$ and $1\le j_1,j_2\le s$, then $i_1=i_2$ and $j_1=j_2$. \end{proposition}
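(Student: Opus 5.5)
Since $\zeta_p$ is a primitive $p$-th root of unity, the equality $\zeta_p^{a}=\zeta_p^{b}$ holds if and only if $a\equiv b \pmod p$. So the hypothesis $\zeta_p^{\,u^{i_1s+j_1}}=\zeta_p^{\,u^{i_2s+j_2}}$ becomes the congruence
\[ u^{i_1s+j_1}\equiv u^{i_2s+j_2}\pmod p. \]
Because $\bar u$ generates the cyclic group $\mathbb{Z}_p^{\star}$ of order $p-1$, two powers of $u$ are congruent modulo $p$ exactly when their exponents agree modulo $p-1$. The plan is therefore to reduce the statement to
\[ i_1s+j_1\equiv i_2s+j_2 \pmod{p-1}, \]
and then use $p-1=rs$ together with the ranges $1\le i_1,i_2\le r$ and $1\le j_1,j_2\le s$ to recover the indices.

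The key step is an injectivity argument for the map $(i,j)\mapsto is+j \bmod rs$ on $\{1,\dots,r\}\times\{1,\dots,s\}$. First reduce the congruence modulo $s$, which is allowed since $s\mid rs$. This gives $j_1\equiv j_2\pmod s$. As $j_1,j_2\in\{1,\dots,s\}$ and $|j_1-j_2|<s$, this forces $j_1=j_2$. Cancelling $j_1=j_2$ leaves $s(i_1-i_2)\equiv 0\pmod{rs}$, hence $i_1\equiv i_2\pmod r$. Since $i_1,i_2\in\{1,\dots,r\}$ and $|i_1-i_2|<r$, we get $i_1=i_2$.

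An alternative check is a counting argument. The exponents $is+j$ with $i\in\{1,\dots,r\}$ and $j\in\{1,\dots,s\}$ run over $s+1,\dots,rs+s$, which are $rs=p-1$ consecutive integers. They therefore form a complete residue system modulo $p-1$, and distinct pairs give distinct residues.

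The main point needing care is the first reduction: that equality of the roots of unity gives a congruence of exponents modulo $p$, and that this in turn gives a congruence modulo the exact order $p-1$ of $u$. Both follow from the primitivity of $\zeta_p$ and of $\bar u$. After that, the argument is only the bounded-range comparison above.
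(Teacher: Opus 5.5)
Your proof is correct and matches the paper's argument: use the primitivity of $\zeta_p$ and of $\bar u$ to reduce to $i_1s+j_1\equiv i_2s+j_2 \pmod{p-1}$, reduce modulo $s$ to get $j_1=j_2$, then obtain $i_1\equiv i_2 \pmod r$ and conclude from the index ranges (a step you justify explicitly where the paper leaves it implicit). Your counting remark, that the exponents $is+j$ fill the $rs$ consecutive integers $s+1,\dots,rs+s$, is a valid extra check that the paper does not include.
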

\begin{proof} For hypothesis, $\zeta_p^{\,u^{i_1s+j_1}} = \zeta_p^{\,u^{i_2s+j_2}}\iff u^{i_1s+j_1} \equiv u^{i_2s+j_2} \pmod p \iff i_1s+j_1 \equiv i_2s+j_2 \pmod{p-1}$. Thus, $(i_1-i_2)s=(j_2-j_1)+rst$, for some $t\in\mathbb{Z}$. Therefore, $j_2\equiv j_1 \pmod s$, that is, $j_1=j_2$. So, $(i_1-i_2)s=rst$ and $i_1\equiv i_2 \pmod r$. Hence, $i_1=i_2$. \end{proof}

\begin{proposition}\label{prop-1} The set $\left\{ \zeta_p^{\,u^{is+j}}:\; 0\le i\le r-1,\; 1\le j\le s \right\}$  is linearly independent over $\mathbb{Q}$. \end{proposition}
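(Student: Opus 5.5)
The plan is to identify the given set with the standard normal basis of $\mathbb{L}$ over $\mathbb{Q}$, namely $\{\zeta_p,\zeta_p^2,\ldots,\zeta_p^{p-1}\}$. We then use the fact that this set is linearly independent over $\mathbb{Q}$.

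First, we check that the set $S=\{\zeta_p^{\,u^{is+j}}: 0\le i\le r-1,\ 1\le j\le s\}$ consists of exactly $rs=p-1$ distinct elements. This follows from Proposition~\ref{lem-1}, or directly: the exponents $is+j$ run over $1,2,\ldots,rs=p-1$ exactly once as $(i,j)$ ranges over the index set. Since $\bar u$ generates $\mathbb{Z}_p^{\star}$, which is cyclic of order $p-1$, the residues $u^{k}\bmod p$ for $k=1,\ldots,p-1$ are a permutation of $1,2,\ldots,p-1$. Hence
\[ S=\{\zeta_p^{k}: 1\le k\le p-1\}. \]

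Second, we show that $\{\zeta_p,\ldots,\zeta_p^{p-1}\}$ is linearly independent over $\mathbb{Q}$. We know that $\{1,\zeta_p,\ldots,\zeta_p^{p-2}\}$ is a $\mathbb{Q}$-basis of $\mathbb{L}$. Multiplying by the nonzero element $\zeta_p$ is a $\mathbb{Q}$-linear automorphism of $\mathbb{L}$, so it sends this basis to $\{\zeta_p,\ldots,\zeta_p^{p-1}\}$, which is therefore again a basis.

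Alternatively, one can argue through the minimal polynomial. Suppose $\sum_{k=1}^{p-1}a_k\zeta_p^k=0$ with $a_k\in\mathbb{Q}$. Dividing by $\zeta_p$ gives a polynomial of degree at most $p-2$ vanishing at $\zeta_p$. Since the minimal polynomial $1+x+\cdots+x^{p-1}$ has degree $p-1$, that polynomial must be zero, so all $a_k=0$.

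The only point needing care is the bookkeeping in the first step. We must confirm that the double index $(i,j)$ with the given ranges produces every exponent in $\{1,\ldots,p-1\}$ exactly once, so that no element of $S$ is repeated and none is missing. Once $S$ is identified as a basis of $\mathbb{L}$, the linear independence is immediate.
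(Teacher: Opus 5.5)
Your proof is correct and follows essentially the same route as the paper. Both arguments identify the given set with $\{\zeta_p^k : 1\le k\le p-1\}$ and then use that this set is a $\mathbb{Q}$-basis of $\mathbb{L}$; the paper does the identification via Proposition~\ref{lem-1} and a count of $rs=p-1$ exponents, while you observe directly that $is+j$ runs over $1,\ldots,p-1$ and that $\bar u$ generates $\mathbb{Z}_p^{\star}$. Your direct enumeration also avoids the small mismatch between the range $1\le i\le r$ in Proposition~\ref{lem-1} and the range $0\le i\le r-1$ used here, and you justify the basis property instead of citing it.
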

\begin{proof} Suppose that \[ \sum_{j=1}^{s}\sum_{i=0}^{r-1} a_{ji}\,\zeta_p^{\,u^{is+j}} = 0, \] where $a_{ji}\in\mathbb{Q}$.
By Proposition \ref{lem-1}, the exponents $u^{is+j}$, with $0\le i\le r-1$ and $1\le j\le s$, are pairwise distinct modulo $p$. Thus, there are $rs=p-1$
such exponents. Hence this relation can be rewritten as \[ \sum_{k=1}^{p-1} b_k\zeta_p^{\,k} = 0, \] where each $b_k\in\mathbb{Q}$.
Since $\{\zeta_p,\zeta_p^2,\ldots,\zeta_p^{p-1}\}$ is a $\mathbb{Q}$-basis of $\mathbb{Q}(\zeta_p)$, it follows that $b_k=0$, for $1\le k\le p-1$. 
Therefore, \[ a_{ji}=0, \ \ \mbox{for} \ \ 0\le i\le r-1 \ \ \mbox{and} \ \ \;1\le j\le s), \] which proves that the given set is linearly independent over $\mathbb{Q}$. \end{proof}

Denote $
t=\operatorname{Tr}_{\mathbb{L}/\mathbb{K}}(\zeta_p)$. Since $t$ is fixed by every element of $H=\operatorname{Gal}(\mathbb{L}/\mathbb{K})$, it follows that $ t\in\mathbb{K}$, and therefore,
\[ \mathbb{Q}(t)\subseteq\mathbb{K}. \] Moreover, $t$ is an algebraic integer because it is the trace of the algebraic
integer $\zeta_p$. Hence $\operatorname{Tr}_{\mathbb{Q}(t)/\mathbb{Q}}(t)\in\mathbb{Z}$. Using the transitivity of the trace, it follows that
\[  -1 = \operatorname{Tr}_{\mathbb{L}/\mathbb{Q}}(\zeta_p) = \operatorname{Tr}_{\mathbb{K}/\mathbb{Q}} \!\left( \operatorname{Tr}_{\mathbb{L}/\mathbb{K}}(\zeta_p)\right) = \operatorname{Tr}_{\mathbb{K}/\mathbb{Q}}(t) = [\mathbb{K}:\mathbb{Q}(t)] \operatorname{Tr}_{\mathbb{Q}(t)/\mathbb{Q}}(t). \]
Since the right-hand side is the product of the positive integer $[\mathbb{K}:\mathbb{Q}(t)]$ and an integer, it follows that $[\mathbb{K}:\mathbb{Q}(t)]=1$.
Therefore, $\mathbb{K}=\mathbb{Q}(t)$.

\begin{theorem}\label{teo-1} The set $S = \left\{t,\,\sigma_u(t),\,\sigma_u^{2}(t),\,\ldots,\,\sigma_u^{\,s-1}(t)\right\}$ is a $\mathbb{Q}$-basis of $\mathbb{K}$. \end{theorem}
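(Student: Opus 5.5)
Since $[\mathbb{K}:\mathbb{Q}]=s$ and $S$ has (at most) $s$ elements, it suffices to prove that $S$ is linearly independent over $\mathbb{Q}$. We already know that $S\subseteq\mathbb{K}$, because $t\in\mathbb{K}$ and $\mathbb{K}/\mathbb{Q}$ is Galois, so each restriction $\sigma_u^i|_{\mathbb{K}}$ maps $\mathbb{K}$ into itself. Linear independence then shows at once that the $s$ elements are distinct and span $\mathbb{K}$, so no separate distinctness argument is needed.

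The plan is to expand each conjugate of $t$ in the powers of $\zeta_p$ and invoke Proposition \ref{prop-1}. As in the proof of the first proposition,
\[ \sigma_u^{\,i}(t)=\sum_{k=0}^{r-1}\zeta_p^{\,u^{ks+i}},\qquad 0\le i\le s-1. \]
To match the indexing of Proposition \ref{prop-1}, where $1\le j\le s$, we treat $i=0$ separately. Since $u^{p-1}\equiv 1 \pmod p$, the $i=0$ conjugate $t=\sum_k\zeta_p^{u^{ks}}$ can be rewritten as $\sum_k \zeta_p^{u^{(k-1)s+s}}$ with $k-1$ taken modulo $r$. The set of exponents $\{u^{ks+s}:0\le k\le r-1\}$ modulo $p$ is the coset $H$ itself, so $t=\sum_{k=0}^{r-1}\zeta_p^{u^{ks+s}}$. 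Hence each element of $S$ is a sum $\sum_{k=0}^{r-1}\zeta_p^{u^{ks+j}}$ for a unique $j\in\{1,\dots,s\}$.

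Now suppose $\sum_{j=1}^{s} c_j\,\big(\sum_{k=0}^{r-1}\zeta_p^{u^{ks+j}}\big)=0$ with $c_j\in\mathbb{Q}$. This is a $\mathbb{Q}$-linear relation among the elements $\zeta_p^{u^{ks+j}}$, in which the coefficient of $\zeta_p^{u^{ks+j}}$ is $c_j$. Proposition \ref{prop-1} says these $rs=p-1$ elements are linearly independent, so every $c_j=0$. Therefore $S$ is linearly independent, and since it consists of $s=[\mathbb{K}:\mathbb{Q}]$ elements of $\mathbb{K}$, it is a $\mathbb{Q}$-basis.

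The only delicate point is the reindexing of $t$ as $\sum_k\zeta_p^{u^{ks+s}}$, which must be justified using $u^{rs}\equiv1\pmod p$. One also has to be sure that distinct $j$ give disjoint exponent sets. Both facts follow from Proposition \ref{lem-1}, or equivalently from the fact that the cosets $u^jH$ partition $\mathbb{Z}_p^\star$.
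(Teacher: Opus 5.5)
Your proposal is correct and follows essentially the same route as the paper. Like the paper, you expand each $\sigma_u^i(t)$ as $\sum_{k}\zeta_p^{u^{ks+i}}$, invoke the linear independence of the $p-1$ primitive $p$-th roots of unity from Proposition \ref{prop-1} to force every coefficient to vanish, and conclude by counting dimensions. Two of your steps make explicit what the paper leaves implicit: the reindexing of $t$ via $u^{rs}\equiv 1 \pmod p$, which matches the range $1\le j\le s$ of Proposition \ref{prop-1}, and the remark that linear independence already forces the $s$ elements of $S$ to be distinct.
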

\begin{proof} Since $\operatorname{Gal}(\mathbb{L}/\mathbb{Q}) = \langle\sigma_u\rangle$ is cyclic of order $p-1$, the subgroup $H=\operatorname{Gal}(\mathbb{L}/\mathbb{K})$
has order $r=(p-1)/s$ and is generated by $\sigma_{u^s}$. Thus $H = \left\{ \sigma_u^0,\sigma_{u}^s,\sigma_{u}^{2s},\ldots,\sigma_{u}^{(r-1)s}\right\}$. Consequently, \[ t = \operatorname{Tr}_{\mathbb{L}/\mathbb{K}}(\zeta_p) = \sum_{\tau\in H}\tau(\zeta_p) =
\zeta_p+\zeta_p^{u^s}+\zeta_p^{u^{2s}}+\cdots+\zeta_p^{u^{(r-1)s}}. \] Since $\mathbb{K}/\mathbb{Q}$ is Galois, every automorphism $\sigma_u^i$ restricts to an automorphism of $\mathbb{K}$. Therefore,
$\sigma_u^i(t)\in\mathbb{K}$, for $0\le i\le s-1$. Moreover, \[ \sigma_u^i(t) = \sum_{j=0}^{r-1} \zeta_p^{\,u^{\,js+i}}, \ \ \mbox{for} \ \ 0\le i\le s-1. \]
Suppose that \[ \sum_{i=0}^{s-1} a_i\sigma_u^i(t)=0, \ \ \mbox{for} \ \ a_i\in\mathbb{Q}. \] Then \[ \sum_{i=0}^{s-1} a_i \left(
\sum_{j=0}^{r-1} \zeta_p^{\,u^{\,js+i}} \right) = 0. \] By Proposition~\ref{prop-1}, the set \[ \left\{ \zeta_p^{\,u^{\,js+i}}: 0\le i\le s-1,\; 0\le j\le r-1\right\} \]
is linearly independent over $\mathbb{Q}$. Hence $a_i=0$, for $0\le i\le s-1$. Therefore, \[ S = \left\{ t,\sigma_u(t),\ldots,\sigma_u^{\,s-1}(t) \right\} \] is linearly independent over $\mathbb{Q}$.
Finally, $|S| = s = [\mathbb{K}:\mathbb{Q}]$, so $S$ is a $\mathbb{Q}$-basis of $\mathbb{K}$. \end{proof}

\begin{corollary} \label{cor-1} The set $S=\left\{t,\,\sigma_u(t),\,\sigma_u^{2}(t),\,\ldots,\,\sigma_u^{\,s-1}(t)\right\}$ is a $\mathbb{Z}$-basis of $\mathcal{O}_{\mathbb{K}}$. \end{corollary}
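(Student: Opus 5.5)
Since $S$ is already a $\mathbb{Q}$-basis of $\mathbb{K}$ by Theorem~\ref{teo-1}, and each $\sigma_u^i(t)$ is an algebraic integer (a sum of roots of unity) lying in $\mathbb{K}$, we have $S\subseteq\mathcal{O}_{\mathbb{K}}$. Hence $\bigoplus_{i=0}^{s-1}\mathbb{Z}\,\sigma_u^i(t)\subseteq\mathcal{O}_{\mathbb{K}}$, and it remains to prove the reverse inclusion. The plan is to take an arbitrary $\alpha\in\mathcal{O}_{\mathbb{K}}$ and show that its coordinates with respect to $S$ are integers. We use the fact that $\mathcal{O}_{\mathbb{K}}=\mathcal{O}_{\mathbb{L}}\cap\mathbb{K}$ together with the known integral basis of $\mathcal{O}_{\mathbb{L}}=\mathbb{Z}[\zeta_p]$.

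First, we change to a convenient $\mathbb{Z}$-basis of $\mathbb{Z}[\zeta_p]$. Since $1=-(\zeta_p+\cdots+\zeta_p^{p-1})$, the set $\{\zeta_p,\zeta_p^2,\ldots,\zeta_p^{p-1}\}$ is also a $\mathbb{Z}$-basis of $\mathbb{Z}[\zeta_p]$. Indeed, it is obtained from $\{1,\zeta_p,\ldots,\zeta_p^{p-2}\}$ by a unimodular change of basis: replace $1$ by $\zeta_p^{p-1}=-1-\zeta_p-\cdots-\zeta_p^{p-2}$. By Proposition~\ref{lem-1} and Proposition~\ref{prop-1}, this basis coincides, as a set, with $\{\zeta_p^{u^{js+i}}:0\le i\le s-1,\ 0\le j\le r-1\}$. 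This is exactly the set of $\sigma\in G$ applied to $\zeta_p$, since $k\mapsto\zeta_p^{u^k}$ for $0\le k\le p-2$ runs over all primitive $p$-th roots of unity.

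Now write $\alpha=\sum_{k=1}^{p-1}b_k\zeta_p^{k}$ with $b_k\in\mathbb{Z}$. Since $\alpha\in\mathbb{K}=\mathbb{L}^H$, we have $\sigma_u^s(\alpha)=\alpha$. The automorphism $\sigma_u^s$ permutes the basis $\{\zeta_p^{k}\}$, sending $\zeta_p^{u^{js+i}}$ to $\zeta_p^{u^{(j+1)s+i}}$ with $j$ read modulo $r$. By uniqueness of coordinates in this basis, the coefficients are therefore constant on each $H$-orbit $\{\zeta_p^{u^{js+i}}:0\le j\le r-1\}$; call this common value $c_i\in\mathbb{Z}$. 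Grouping terms by orbits gives $\alpha=\sum_{i=0}^{s-1}c_i\sum_{j}\zeta_p^{u^{js+i}}=\sum_{i=0}^{s-1}c_i\,\sigma_u^i(t)$ with $c_i\in\mathbb{Z}$, which proves the reverse inclusion.

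The main point needing care is the orbit step. It requires the integral basis to be exactly the set of Galois conjugates of $\zeta_p$, so that $H$ acts on it by permutation. That is why we switch to $\{\zeta_p,\ldots,\zeta_p^{p-1}\}$ rather than using $\{1,\ldots,\zeta_p^{p-2}\}$: the latter is not stable under $H$. Everything else is bookkeeping already supplied by Propositions~\ref{lem-1} and~\ref{prop-1}.
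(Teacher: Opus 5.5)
Your proof is correct, but you obtain the key step differently from the paper. The key step is that the coordinates of $\alpha$ in the basis $\{\zeta_p,\ldots,\zeta_p^{p-1}\}$ are constant on each block $\{\zeta_p^{u^{js+i}}:0\le j\le r-1\}$.

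The paper gets this from the spanning half of Theorem~\ref{teo-1}. It writes $\alpha=\sum_i a_i\sigma_u^i(t)$ with $a_i\in\mathbb{Q}$. Expanding, each $\zeta_p^k$ automatically receives one of the $a_i$ as its coefficient, and since $\alpha\in\mathbb{Z}[\zeta_p]$ these coefficients must be integers.

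You go the other way. You start from the integer coordinates $b_k$ and deduce block-constancy from $\sigma_u^s(\alpha)=\alpha$, using that $H$ permutes the basis.

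Both proofs rest on the same structural fact: the $\sigma_u^i(t)$ are sums over a partition of the $\mathbb{Z}$-basis $\{\zeta_p,\ldots,\zeta_p^{p-1}\}$. The trade-off is as follows.
\begin{itemize}
\item Your argument really uses only the linear-independence half of Theorem~\ref{teo-1}, i.e.\ Proposition~\ref{prop-1}. It proves spanning directly from $\mathbb{K}=\mathbb{L}^H$, without the degree count $[\mathbb{K}:\mathbb{Q}]=s$. This is the standard fact that the invariants of a permutation lattice are spanned by orbit sums, and it works unchanged over any coefficient ring.
\item The paper's version is shorter once Theorem~\ref{teo-1} is in hand.
\end{itemize}

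One small plus on your side: you justify the switch to the basis $\{\zeta_p,\ldots,\zeta_p^{p-1}\}$ by an explicit unimodular change of basis, whereas the paper only asserts that this set is a $\mathbb{Z}$-basis of $\mathbb{Z}[\zeta_p]$.
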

\begin{proof} By Theorem~\ref{teo-1}, the set \[ S = \left\{ t,\,\sigma_u(t),\,\ldots,\,\sigma_u^{\,s-1}(t)\right\} \] is a $\mathbb{Q}$-basis of $\mathbb{K}$.
First, we prove that $\langle S\rangle_{\mathbb{Z}}\subseteq\mathcal{O}_{\mathbb{K}}$. Let \[ \alpha = \sum_{i=0}^{s-1}a_i\sigma_u^i(t), \ \ \mbox{with} \ \  a_i\in\mathbb{Z}. \]
Since \[ t = \sum_{j=0}^{r-1} \zeta_p^{\,u^{js}}, \] it follows that \[ \sigma_u^i(t) = \sum_{j=0}^{r-1} \zeta_p^{\,u^{js+i}}, \ \ \mbox{for} \ \ 0\le i\le s-1. \]
Each $\sigma_u^i(t)$ is a sum of roots of unity and therefore belongs to
$\mathbb{Z}[\zeta_p]=\mathcal{O}_{\mathbb{L}}$. Since $\sigma_u^i(t)\in\mathbb{K}$ and $\mathcal{O}_{\mathbb{K}} = \mathcal{O}_{\mathbb{L}}\cap\mathbb{K}$, it follows that
\[ \sigma_u^i(t)\in\mathcal{O}_{\mathbb{K}}. \] Hence $\alpha\in\mathcal{O}_{\mathbb{K}}$, and consequently $\langle S\rangle_{\mathbb{Z}} \subseteq \mathcal{O}_{\mathbb{K}}$.
Conversely, let $\alpha\in\mathcal{O}_{\mathbb{K}}$. Since $S$ is a $\mathbb{Q}$-basis of $\mathbb{K}$, there exist unique
coefficients $a_0,\ldots,a_{s-1}\in\mathbb{Q}$ such that \[ \alpha = \sum_{i=0}^{s-1} a_i\sigma_u^i(t). \] Using the above expression for the conjugates of $t$, it follows that 
\[ \alpha = \sum_{i=0}^{s-1} a_i \left( \sum_{j=0}^{r-1} \zeta_p^{\,u^{js+i}} \right). \] By Proposition \ref{lem-1}, the exponents $u^{js+i}$, for $0\le i\le s-1$ and $0\le j\le r-1$,
are pairwise distinct modulo $p$. Since there are $rs=p-1$ such exponents, they are precisely the nonzero residue classes modulo $p$. Hence
\[ \alpha = \sum_{k=1}^{p-1} b_k\zeta_p^k, \] where each coefficient $b_k$ is one of the numbers $a_0,\ldots,a_{s-1}$. Now, \[
\alpha\in\mathcal{O}_{\mathbb{K}} \subseteq \mathcal{O}_{\mathbb{L}} = \mathbb{Z}[\zeta_p]. \] Since $\{\zeta_p,\zeta_p^2,\ldots,\zeta_p^{p-1}\}$ is a $\mathbb{Z}$-basis of $\mathbb{Z}[\zeta_p]$, it follows that $b_k\in\mathbb{Z}$, for $1\le k\le p-1$. 
Therefore, $a_i\in\mathbb{Z}$, for $0\le i\le s-1$. Hence $\alpha\in\langle S\rangle_{\mathbb{Z}}$,  which proves that $\mathcal{O}_{\mathbb{K}} \subseteq\langle S\rangle_{\mathbb{Z}}$.
So $\mathcal{O}_{\mathbb{K}} = \langle S\rangle_{\mathbb{Z}}$, and therefore, \[ S= \left\{t,\,\sigma_u(t),\,\sigma_u^{2}(t),\, \ldots,\,\sigma_u^{\,s-1}(t) \right\} \] is a $\mathbb{Z}$-basis of $\mathcal{O}_{\mathbb{K}}$.
\end{proof}

\begin{remark} \label{rem-0} A cyclic number field $\mathbb{K}$ is said to admit an integral normal basis if $\{t,\theta(t),\theta^2(t),\ldots,\theta^{s-1}(t)\}$ is a $\mathbb{Z}$-basis for $\mathcal{O}_{\mathbb{K}}$, where $t\in \mathcal{O}_\mathbb{K}$ and $\theta$ is a generator of $\textrm{Gal}(\mathbb{K}/\mathbb{Q})$. Hence, from Corollary \ref{cor-1}, every subfield of $\mathbb{Q}(\zeta_p)$ admits an integral normal basis given by $\{t,\theta(t),\theta^2(t),\ldots,\theta^{s-1}(t)\}$, where $\theta=\sigma_{u}|_\mathbb{K}$. \end{remark}

\section{The trace form} \label{sec-3}

Let $\mathbb{L}=\mathbb{Q}(\zeta_p)$ be a $p$-th cyclotomic field, where $p$ is a prime number. Let $\mathbb{K}$ be a number field such that $\mathbb{K}\subseteq\mathbb{L}$, with $[\mathbb{K}:\mathbb{Q}]=s$ and $[\mathbb{L}:\mathbb{K}]=r$. Let $\theta$ be the generator of the Galois group $Gal(\mathbb{K}/\mathbb{Q})$. Let $\mathcal{O}_{\mathbb{L}}=\mathbb{Z}[\zeta_p]$ and $\mathcal{O}_{\mathbb{K}}$ be the ring of algebraic integers of $\mathbb{L}$ and $\mathbb{K}$, respectively.
In this section, we present an expression of the integral trace form, i.e., the trace of an element of the form $x\overline{x}$, where $x$ is an algebraic integer of $\mathcal{O}_{\mathbb{K}}$. The minimum of this trace form is an important parameter in the study of the packing density of the lattice realized by $\mathcal{O}_\mathbb{K}$ via the canonical embedding.

It is well-known that $$Tr_{\mathbb{L}}(\zeta_p^k)=\begin{cases} -1 & \textrm{if } k\not\equiv 0 \pmod p,\\ p-1 & \textrm{if } k\equiv 0\pmod p. \end{cases}$$ If $x=\displaystyle \sum_{i=0}^{p-2}a_i\zeta_p^i\in\mathcal{O}_\mathbb{L}=\mathbb{Z}[\zeta_p]$, with $a_0,a_1,\ldots,a_{p-2}\in\mathbb{Z}$, then
\[  \begin{array}{lll} x\overline{x} &=& (a_0+a_1\zeta_p+\cdots+a_{p-2}\zeta_p^{p-2})(a_0+a_1\zeta_p^{-1}+\cdots+a_{p-2}\zeta_p^{-(p-2)})\vspace{.2cm} \\ &=&(a_0^2+\cdots +a_{p-2}^2)+(a_0a_1+\cdots +a_{p-3}a_{p-2})(\zeta_p+\zeta_p^{-1})\vspace{.2cm}\\ && \displaystyle +\cdots + (a_0a_{p-3}+a_1a_{p-2})(\zeta_p^{p-3}+\zeta_p^{-(p-3)})+a_0a_{p-2}(\zeta_p^{p-2}+\zeta_p^{-(p-2)}).\end{array} \]
Denote $x_i=\zeta_p^{i}+\zeta_p^{-i}$ and $A_i=a_0a_i+a_1a_{i+1}+\cdots +a_{p-2-i}a_{p-2}$, for $i=1,\ldots,p-2$. With this notation, \begin{equation} \label{expxxbarra} x\overline{x}=A_0+A_1x_1+\cdots +A_{p-2}x_{p-2}. \end{equation}
Consequently, the trace form associated to $\mathbb{L}$ is given by the following:

\begin{theorem} \label{teo-2} If $x=\displaystyle\sum_{i=0}^{p-2}a_i\zeta_p^i\in\mathbb{Z}[\zeta_p]$, then \[ Tr_{\mathbb{L}}(x\overline{x})=\displaystyle p\sum_{i=0}^{p-2}a_i^2 - \left(\sum_{i=0}^{p-2}a_i\right)^2. \]\end{theorem}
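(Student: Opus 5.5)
Since the trace is $\mathbb{Q}$-linear, I would apply $Tr_{\mathbb{L}}$ termwise to the expansion (\ref{expxxbarra}),
\[ x\overline{x}=A_0+A_1x_1+\cdots +A_{p-2}x_{p-2}, \]
where $A_0=\sum_{i=0}^{p-2}a_i^2$ and $x_i=\zeta_p^{i}+\zeta_p^{-i}$ for $1\le i\le p-2$. The constant term gives $Tr_{\mathbb{L}}(A_0)=(p-1)A_0$. For $1\le i\le p-2$ we have $i\not\equiv 0$ and $-i\not\equiv 0 \pmod p$. The quoted formula $Tr_{\mathbb{L}}(\zeta_p^k)=-1$ for $k\not\equiv 0$ therefore gives $Tr_{\mathbb{L}}(x_i)=-2$. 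Hence
\[ Tr_{\mathbb{L}}(x\overline{x})=(p-1)A_0-2\sum_{i=1}^{p-2}A_i. \]

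The remaining step is to recognize $\sum_{i\ge1}A_i$. By definition, $A_i=\sum_{k=0}^{p-2-i}a_ka_{k+i}$, so summing over $i\ge 1$ runs over every unordered pair $k<l$ of indices in $\{0,\ldots,p-2\}$ exactly once (with $l=k+i$). Thus
\[ 2\sum_{i=1}^{p-2}A_i=\Big(\sum_{i=0}^{p-2}a_i\Big)^2-\sum_{i=0}^{p-2}a_i^2 . \]
Substituting this into the previous display gives
\[ (p-1)A_0-\Big(\sum a_i\Big)^2+A_0=p\sum_{i=0}^{p-2}a_i^2-\Big(\sum_{i=0}^{p-2}a_i\Big)^2, \]
which is the claimed identity.

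The only delicate point is bookkeeping in (\ref{expxxbarra}). I need to confirm that each cross term $a_ka_l$ with $k\ne l$ appears exactly once, paired with $\zeta_p^{l-k}+\zeta_p^{k-l}$. This holds because the product $a_k\zeta_p^k\cdot a_l\zeta_p^{-l}$ together with $a_l\zeta_p^l\cdot a_k\zeta_p^{-k}$ gives precisely $a_ka_l x_{l-k}$. I also need the exponents $\pm(l-k)$, which lie in $\{1,\ldots,p-2\}$, to be nonzero mod $p$ so that the trace value $-1$ applies. Both checks are immediate, so I expect no real obstacle.

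As an alternative check, one can write $Tr_{\mathbb{L}}(x\overline{x})=\sum_{k,l}a_ka_l\,Tr_{\mathbb{L}}(\zeta_p^{k-l})$. This equals $(p-1)\sum_k a_k^2-\sum_{k\ne l}a_ka_l$, which gives the same result directly.
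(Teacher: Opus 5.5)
Your proof is correct and is essentially the paper's argument. You apply the trace termwise to $x\overline{x}=A_0+\sum_{i\ge 1}A_ix_i$ using $Tr_{\mathbb{L}}(1)=p-1$ and $Tr_{\mathbb{L}}(x_i)=-2$, then identify $2\sum_{i\ge1}A_i$ with $\left(\sum a_i\right)^2-\sum a_i^2$. Your closing double-sum check via $\sum_{k,l}a_ka_l\,Tr_{\mathbb{L}}(\zeta_p^{k-l})$ is a slightly cleaner way to do the same bookkeeping, since it avoids verifying how the cross terms pair up in the expansion.
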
 \begin{proof} 
From Equation (\ref{expxxbarra}), $x\overline{x}=A_0+A_1x_1+\cdots + A_{p-2}x_{p-2}$, where $A_i=a_0a_i+a_1a_{i+1}+\cdots +a_{p-2-i}a_{p-2}.$ Since
\[ Tr_{\mathbb{L}}(x_i)=Tr_{\mathbb{L}}(\zeta_p^i+\zeta_p^{-i}) = Tr_{\mathbb{L}}(\zeta_p^i)+Tr_{\mathbb{L}}(\zeta_p^{-i})=-1-1=-2\]
and the coefficient $x_0$ of $A_0$ is equal to $1$, we have that \[ Tr_{\mathbb{L}}(1)=[\mathbb{L}:\mathbb{Q}]\cdot 1 = p-1. \] Thus,
\[ \begin{array}{lll} Tr_{\mathbb{L}}(x\overline{x}) &=& (p-1)A_0-2(A_1+A_2+\cdots+A_{p-2}) =(p-1)A_0\vspace{.2cm}\\ && -2(a_0a_1+\cdots+a_{p-3}a_{p-2}+a_0a_2+\cdots +a_{p-4}a_{p-2}+\cdots\vspace{.2cm}\\
&& +a_0a_{p-3}+a_1a_{p-2}+a_0a_{p-2}).\end{array} \] Hence, \[ Tr_{\mathbb{L}}(x\overline{x})=p\displaystyle\sum_{i=0}^{p-2}a_i^2-\left(\displaystyle\sum_{i=0}^{p-2}a_i^2+2\displaystyle\sum_{0\leq
i< j\leq p-2}a_ia_j\right). \] Therefore, \begin{equation} \label{eqtra1} Tr_{\mathbb{L}}(x\overline{x})=p\displaystyle\sum_{i=0}^{p-2}a_i^2-\left(\displaystyle\sum_{i=0}^{p-2}a_i\right)^2,
\end{equation} which proves the result. \end{proof}

From Equation ($\ref{eqtra1}$), it follows that \[ \begin{array}{lll} \displaystyle p\sum_{i=0}^{p-2}a_i^2-\left(\sum_{i=0}^{p-2}a_i\right)^2 & = &
 \displaystyle (p-1)\sum_{i=0}^{p-2}a_i^2 - \displaystyle\sum_{o\leq i<j\leq p-2} a_ia_j \vspace{.2cm}\\ &=& \displaystyle\sum_{i=0}^{p-2}a_i^2
+(p-2)\sum_{i=0}^{p-2}a_i^2- \sum_{o\leq i<j\leq p-2}a_ia_j \vspace{.2cm} \\ &=&   \displaystyle \sum_{i=0}^{p-2}a_i^2 + \sum_{o\leq i<j\leq p-2}(a_i-a_j)^2. \end{array} \]
Therefore, \[ Tr_{\mathbb{L}}(x\overline{x})=\displaystyle\sum_{i=0}^{p-2}a_i^2+\displaystyle\sum_{0\leq i< j\leq p-2}(a_i-a_j)^2. \] Furthermore, from Equation (\ref{eqtra1}), \[ \min\{ Tr_{\mathbb{L}/\mathbb{Q}}(x\overline{x}); \  x \in \mathcal{O}_{\mathbb{L}}, \ x \neq 0\} = p-1, \] where the minimum is attained at $x=1$. 


\begin{theorem}\label{prop500} If $x=\displaystyle \sum_{i=0}^{s-1} a_i\theta^i(t) \in \mathcal{O}_{\mathbb{K}}$, where $a_0,a_1,\ldots,a_{s-1}\in\mathbb{Z}$ and $t=Tr_{\mathbb{L}/\mathbb{K}}(\zeta_p)$, then \[ Tr_{\mathbb{K}}(x\overline{x}) = p\sum_{i=0}^{s-1}a_i^2-\frac{p-1}{s}\left(\sum_{i=0}^{s-1}a_i\right)^2 . \] \end{theorem}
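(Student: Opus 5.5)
The idea is to push the computation up to $\mathbb{L}$ and apply Theorem \ref{teo-2}. I will use transitivity of the trace together with the fact that complex conjugation commutes with the Galois action; it is the automorphism $\sigma_{-1}$, which restricts to $\mathbb{K}$ since $\mathbb{K}/\mathbb{Q}$ is Galois.

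\textbf{Step 1: transfer to $\mathbb{L}$.} Since $x\in\mathcal{O}_{\mathbb{K}}$, we have $x\overline{x}\in\mathbb{K}$. Hence
\[ Tr_{\mathbb{L}}(x\overline{x})=Tr_{\mathbb{K}}\bigl(Tr_{\mathbb{L}/\mathbb{K}}(x\overline{x})\bigr)=r\,Tr_{\mathbb{K}}(x\overline{x}), \]
and it suffices to compute $Tr_{\mathbb{L}}(x\overline{x})$ and divide by $r=(p-1)/s$.

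\textbf{Step 2: write $x$ in the power basis.} By the proof of Corollary \ref{cor-1}, $x=\sum_{i=0}^{s-1}a_i\theta^i(t)=\sum_{k=1}^{p-1}b_k\zeta_p^k$. Here each $b_k$ equals $a_i$ exactly when $k\equiv u^{js+i}\pmod p$ for some $j$, so each $a_i$ occurs exactly $r$ times among the $b_k$ (Proposition \ref{lem-1}). Theorem \ref{teo-2} is stated in the basis $1,\zeta_p,\ldots,\zeta_p^{p-2}$, so I must convert. Using $\zeta_p^{p-1}=-1-\zeta_p-\cdots-\zeta_p^{p-2}$, we get $x=\sum_{k=0}^{p-2}c_k\zeta_p^k$ with $c_0=-b_{p-1}$ and $c_k=b_k-b_{p-1}$ for $1\le k\le p-2$.

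\textbf{Step 3: apply Theorem \ref{teo-2} and simplify.} The quantity $p\sum_k c_k^2-(\sum_k c_k)^2$ can be rewritten as $\sum_{0\le k<l\le p-1}(e_k-e_l)^2$ with $e_0=0$ and $e_k=c_k$. Shifting every $e$ by $b_{p-1}$ turns this into the same sum over the vector $(b_{p-1},b_1,\ldots,b_{p-2},b_{p-1})$ up to relabeling. More directly, the translation-invariant expression $\sum_{0\le k<l\le p-1}(e_k-e_l)^2$ over the $p$ values $(0,b_1-b_{p-1},\ldots,b_{p-2}-b_{p-1},-b_{p-1})$ equals that over $(b_{p-1},b_1,\ldots,b_{p-2},0)$, that is, over $\{0,b_1,\ldots,b_{p-1}\}$. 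That sum equals $p\sum_{k=1}^{p-1}b_k^2-(\sum_{k=1}^{p-1} b_k)^2$. Substituting the multiplicities from Step 2 gives
\[ Tr_{\mathbb{L}}(x\overline{x})=pr\sum_i a_i^2-r^2\Bigl(\sum_i a_i\Bigr)^2. \]
Dividing by $r$ yields $p\sum a_i^2-\frac{p-1}{s}(\sum a_i)^2$.

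The main obstacle is this basis change. Theorem \ref{teo-2} uses the basis $\{1,\ldots,\zeta_p^{p-2}\}$, while $x$ naturally lives in $\{\zeta_p,\ldots,\zeta_p^{p-1}\}$, and the symmetric identity $p\sum e^2-(\sum e)^2=\sum_{k<l}(e_k-e_l)^2$ over $p$ entries (one entry $0$) is what makes the formula basis-independent.

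As a cross-check, I can instead use $Tr_{\mathbb{L}}(\zeta_p^{k-l})$ directly on $x\overline{x}=\sum_{k,l}b_kb_l\zeta_p^{k-l}$. This gives $(p-1)\sum b_k^2-\sum_{k\ne l}b_kb_l=p\sum b_k^2-(\sum b_k)^2$ immediately, and that cleaner route is the one I would ultimately write.
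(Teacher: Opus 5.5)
Your proposal is correct and follows essentially the same route as the paper: expand $x$ over the roots of unity $\zeta_p^{u^{js+i}}$ with each $a_i$ repeated $r$ times, compute $Tr_{\mathbb{L}}(x\overline{x})=r\sum a_i^2+r^2\sum_{i<j}(a_i-a_j)^2=pr\sum a_i^2-r^2(\sum a_i)^2$, and divide by $r$ using transitivity of the trace. Your explicit handling of the basis mismatch, either through translation invariance or more simply by using $Tr_{\mathbb{L}}(\zeta_p^{k-l})$ directly, fills in a step the paper skips: it applies Theorem~\ref{teo-2} to an expansion in $\{\zeta_p,\ldots,\zeta_p^{p-1}\}$ rather than in $\{1,\zeta_p,\ldots,\zeta_p^{p-2}\}$ without comment.
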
 
\begin{proof} Let $x=\displaystyle \sum_{i=0}^{s-1} a_i\theta^i(t) \in \mathcal{O}_{\mathbb{K}}$, with $a_0,a_1,\ldots,a_{s-1}\in\mathbb{Z}$. Since $t=Tr_{\mathbb{L}/\mathbb{K}}(\zeta_p)\in \mathcal{O}_{\mathbb{K}}$, it follows that $t={\zeta_p}^{u^s}+{\zeta_p}^{u^{2s}}+\cdots+{\zeta_p}^{u^{rs}}$. So, $\sigma_{u^i}(t)={\zeta_p}^{u^{s+i}}+\cdots+{\zeta_p}^{u^{rs+i}}$, for $i=0,1,\ldots,s-1$, and
$$x = \sum_{i=0}^{s-1} a_i\left(\sum_{j=0}^{r-1}\zeta_p^{u^{js+i}} \right),$$ that is, $x=a_0({\zeta_p}^{u^s}+{\zeta_p}^{u^{2s}}+\cdots+{\zeta_p}^{u^{rs}})+a_1({\zeta_p}^{u^{s+1}}+{\zeta_p}^{u^{2s+1}}+\cdots+{\zeta_p}^{u^{rs+1}})+\cdots+a_{s-1}({\zeta_p}^{u^{s+s-1}}+\cdots+{\zeta_p}^{u^{rs+s-1}})$. 
From Theorem \ref{teo-2}, it follows that \[ Tr_{\mathbb{L}}(x\overline{x}) = r\sum_{i=0}^{s-1}a{_{i}^{2}} + {r^2}\sum_{0\leq i < j \leq s-1}^{}{(a_i - a_j)}^{2}. \] Since  $Tr_{\mathbb{L}/\mathbb{Q}}(x\overline{x}) = [\mathbb{L}:\mathbb{K}] Tr_{\mathbb{K}}(x\overline{x})$, then
$$Tr_{\mathbb{K}}(x\overline{x})=\frac{1}{r}Tr_{\mathbb{L}/\mathbb{Q}}(x\overline{x}),$$ and so, \[ Tr_{\mathbb{K}}(x\overline{x})=\sum_{i=0}^{s-1}a_{i}^{2} + r\sum_{0 \leq i < j \leq s-1}^{}{(a_i - a_j)}^{2}. \] Therefore, \[ Tr_{\mathbb{K}}(x\overline{x}) = p\sum_{i=0}^{s-1}a_i^2-\frac{p-1}{s} \left(\sum_{i=0}^{s-1}a_i\right)^2, \] which proves the result. \end{proof} 

\section{Algebraic lattices} \label{sec-4}

If $\mathbb{K}$ is an algebraic number field of degree $n$, there are exactly $n$ distinct $\mathbb{Q}$-monomorphisms $\sigma_i:\mathbb{K}\rightarrow\mathbb{C}$, for $i=1,2,\ldots,n$. A $\mathbb{Q}$-monomorphism $\sigma_i$ is said to be real if $\sigma_i(\mathbb{K})\subseteq\mathbb{R}$. Otherwise, $\sigma_i$ is said to be imaginary. It is a fact that $n=r_1+2r_2$, where $r_1$ is the number of real $\mathbb{Q}$-monomorphisms of $\mathbb{K}$ and $r_2$ is the number of pairs of imaginary $\mathbb{Q}$-monomorphisms. The number field $\mathbb{K}$ is said to be totally real if $\sigma_i$ is real for all $i=1,2,\ldots,n$. In turn, the number field $\mathbb{K}$ is totally imaginary if $\sigma_i$ is imaginary for all $i=1,2,\ldots,n$. 

Let $\sigma_1,\sigma_2,\ldots,\sigma_n$ denote the $\mathbb{Q}$-monomorphisms of $\mathbb{K}$ ordered such that $\sigma_1,\ldots,\sigma_{r_1}$ are the real monomorphisms and $\sigma_{r_1+1},\ldots,\sigma_{r_1+r_2}$ and $\sigma_{r_1+r_2+j}=\overline{\sigma_{r_1+j}}$, for $j=1,2,\ldots,r_2$, are the imaginary monomorphisms. The trace of an element $x\in\mathbb{K}$ over $\mathbb{Q}$ can be computed by $Tr_{\mathbb{K}}(x)=\sum_{i=1}^n\sigma_i(x)$. The discriminant of $\mathbb{K}$ over $\mathbb{Q}$ is defined by $D(\mathbb{K})=\det(Tr_{\mathbb{K}}(\alpha_i\alpha_j))_{i,j=1}^n$, where $\{\alpha_1,\alpha_2,\ldots,\alpha_n\}$ is an integral basis of $\mathcal{O}_{\mathbb{K}}$ (for details, see, e.g., \cite{samuel}). 

The canonical embedding $\sigma:\mathbb{K}\rightarrow\mathbb{R}^{r_1}\times\mathbb{C}^{r_2}$ is defined by  \[ \sigma(x) = (\sigma_1(x),\ldots,\sigma_{r_1}(x),\sigma_{r_1+1}(x),\ldots,\sigma_{r_1+r_2}(x)), \] where $x\in\mathbb{K}$ \cite[Section 4.2]{samuel}. We shall frequently identify $\mathbb{R}^{r_1}\times\mathbb{C}^{r_2}$ with $\mathbb{R}^n$, and thus, we consider $\sigma:\mathbb{K}\rightarrow\mathbb{R}^{n}$ given, for each $x\in \mathbb{K}$, by 
\begin{multline*} \sigma(x) = (\sigma_1(x),\ldots,\sigma_{r_1}(x),\mathfrak{R}(\sigma_{r_1+1}(x)),\mathfrak{I}(\sigma_{r_1+1}(x)),\\  \ldots,\mathfrak{R}(\sigma_{r_1+r_2}(x)),\mathfrak{I}(\sigma_{r_1+r_2}(x))), \end{multline*}
where $\mathfrak{R}(\beta)$ and $\mathfrak{I}(\beta)$ denote the real and imaginary parts of the complex number $\beta$, respectively.

Suppose $\overline{\sigma_i(x)}=\sigma_i(\overline{x})$ for all $x\in\mathbb{K}$ and for all $i=0,1,\ldots,n$. If $\mathcal{M}$ is a free $\mathbb{Z}$-module of $\mathcal{O}_{\mathbb{K}}$ of rank $n$, then $\sigma(\mathcal{M})$ is an $n$-dimensional lattice whose minimum is given by $\min\{||\sigma(x)||^2:x\in\mathcal{M},\ x\neq 0\}$, where \[ ||\sigma(x)||^2 = \left\{ \begin{array}{ll} Tr_{\mathbb{K}}(x^2) & \mbox{if} \ \mathbb{K} \ \mbox{is totally real,}\\  \frac{1}{2}Tr_{\mathbb{K}}(x\overline{x}) & \mbox{if} \ \mathbb{K} \ \mbox{is totally complex}. \end{array} \right. \] 

The center density of the lattice $\sigma(\mathcal{M})$ is given by \begin{equation} \label{densidade} \delta(\sigma(\mathcal{M})) = \frac{\rho^{n/2}}{2^n\sqrt{|D(\mathbb{K})|}[\mathcal{O}_{\mathbb{K}}:\mathcal{M}]}, \end{equation} where $\rho$ denotes the minimum of trace form of $\mathbb{K}$ over $\mathcal{M}$ and $[\mathcal{O}_{\mathbb{K}}:\mathcal{M}]$ denotes the index of $\mathcal{M}$ in $\mathcal{O}_{\mathbb{K}}$ (see, e.g., \cite{samuel}). 
In particular, if $\mathbb{K}$ is a subfield of $\mathbb{L}=\mathbb{Q}(\zeta_p)$ of degree $s$, then $D(\mathbb{K})=\pm p^{s-1}$ (\cite[Corollary 4.2]{trajano}) and 
\begin{equation}\label{eq_delta} \delta(\sigma(\mathcal{M})) = \frac{\rho^{s/2}}{2^sp^{\frac{s-1}{2}}[\mathcal{O}_{\mathbb{K}}:\mathcal{M}]}. \end{equation}
Furthermore, the map \[ \begin{array}{llll} \varphi:&\mathbb{Z}^s & \rightarrow & \mathcal{O}_{\mathbb{K}}\\ & {\bf x}=(a_0,a_1,\ldots,a_{s-1}) & \mapsto & \displaystyle \varphi({\bf x})=\sum_{i=0}^{s-1}a_i\theta^i(t), \end{array} \] is an isomorphism of $\mathbb{Z}$-modules, where $\theta$ is a generator of $Gal(\mathbb{K}/\mathbb{Q})$. The map $\phi=Tr_{\mathbb{K}}\circ\varphi:\mathbb{Z}^s\rightarrow \mathbb{N}\cup\{0\}$ given by \[ \phi({\bf x})=p\sum_{i=0}^{s-1}a_i^2-\frac{p-1}{s}\left(\sum_{i=0}^{s-1}a_i\right)^2, \] for each ${\bf x}=(a_0,a_1,\ldots,a_{s-1})\in\mathbb{Z}^s$, is a positive-definite quadratic form.


\section{Constructions of algebraic lattices} \label{sec-5}

In this section, we provide a construction of algebraic lattices coming from certain modules in the ring of integers of $\mathbb{K}\subseteq\mathbb{Q}(\zeta_p)$. These modules are obtained from isomorphic subgroups corresponding to some special subspaces of $\mathbb{F}_p^s$, as presented below.

Consider ${\bf v}=(a_0,a_1,\ldots,a_{s-1})\in \mathbb{Z}^s$
and $\overline{\bf v}=(\overline{a_0},\overline{a_1},\ldots,\overline{a_{s-1}})$ in the vector space $V=\mathbb{F}_{p}^s$, where $\mathbb{F}_p$ is the finite field of $p$ elements, and where $\overline{a}\in\mathbb{F}_p$ denotes the reduction modulo $p$ of $a\in\mathbb{Z}$. Let \[ \mathcal{M}_{\bf v} = \{{\bf x}\in\mathbb{Z}^s:{\bf x}\cdot{\bf v}\equiv 0\pmod p\} = \mathcal{M}_{\bf v} = \{{\bf x}\in\mathbb{Z}^s:\overline{\bf x}\cdot\overline{\bf v}=\overline{0} \ \ \mbox{in} \ \ \mathbb{F}_p\}, \] where  $u\cdot v$ denotes the usual dot product between two vectors $u$ and $v$.

Since $\mathbb{F}_p^*$ is a cyclic group, there exists an element $\beta\in\mathbb{Z}$ such that $\beta$ is a $(p-1)$-th primitive root modulo $p$, that is, $\beta^{p-1}\equiv 1\pmod p$ and $\beta^j\not\equiv 1\pmod p$ for $j=1,2,\ldots,p-2$. The element $\alpha=\beta^{\frac{p-1}{s}}$ is a $s$-th primitive root modulo $p$. For each $i=0,1,\ldots,s-1$, denote \begin{equation} \label{eq-0} {\bf \mu_i}= \left(1,\alpha^i,\alpha^{2i},\ldots,\alpha^{(s-1)i}\right) \in \mathbb{Z}^s \end{equation} and \[ \overline{{\bf \mu_i}} = \left(1,\overline{\alpha}^i,\overline{\alpha}^{2i},\ldots,\overline{\alpha}^{(s-1)i}\right) \in \mathbb{F}_p^s. \] It is a known fact that the Vandermonde matrix \[ M := \left[ \begin{array}{cccc} 1 & 1 & \cdots & 1\\ 1 & \overline{\alpha} & \cdots & \overline{\alpha}^{(s-1)} \\ \vdots & \vdots & \ddots & \vdots \\ 1 & \overline{\alpha}^{s-1} & \cdots & (\overline{\alpha}^{s-1})^{s-1} \end{array} \right], \] has determinant
\[ \det(M) = \prod_{\substack{0\leq i<j<s}}(\overline{\alpha}^i-\overline{\alpha}^j), \] which is nonzero because
\begin{multline}\label{eq-1} \overline{\mu_i}\cdot\overline{\mu_j}  = \sum_{k=0}^{s-1} \overline{\alpha}^{k(i+j)} = \begin{cases} \frac{(\beta^{i+j})^s-1}{\beta^{i+j}-1} & \mbox{if } i+j\not\equiv 0\pmod s\\ s & \mbox{if } i+j\equiv 0\pmod s    \end{cases}\\ =  \begin{cases}
0 & \mbox{if } i+j\not\equiv 0\pmod s\\ s & \mbox{if } i+j\equiv 0\pmod s.\end{cases} \end{multline} Therefore, $\{\overline{\mu_0},\overline{\mu_1},\ldots,\overline{\mu_{s-1}}\}$ is a basis of $\mathbb{F}_p^s$.
If $s\equiv 1\pmod 2$, consider the subspace \[ W = \left\langle\overline{\mu_0},\overline{\mu_1},\ldots,\overline{\mu_{\frac{s-1}{2}}}\right\rangle_{\mathbb{F}_p} \subseteq\mathbb{F}_p^s. \]
Otherwise, if $s\equiv 0\pmod 2$, consider
\[  W=\left\langle\overline{\bf \mu_0},\overline{\bf \mu_1},\ldots,\overline{\bf \mu_{\frac{s}{2}}}\right\rangle_{\mathbb{F}_p} \subseteq\mathbb{F}_p^s. \] In each case, let \[ W^{\perp} = \{ {\bf x} \in \mathbb{F}_p^s:{\bf x}\cdot{\bf y}=0, \ \ \mbox{for all} \ \ {\bf y}\in W\} \] be the orthogonal complement of $W$.

\begin{proposition}\label{prop-2} If $s\equiv 1(mod\ 2)$, the orthogonal complement of $W$ is given by $$W^{\perp} = \left\langle\overline{\bf \mu_1},\overline{\bf \mu_2},\ldots,\overline{\bf \mu_{\frac{s-1}{2}}}\right\rangle_{\mathbb{F}_p} \subseteq\mathbb{F}_p^s.$$ \end{proposition}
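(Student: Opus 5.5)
The plan is to prove the two inclusions by a containment-plus-dimension argument, using the orthogonality relations (\ref{eq-1}) and the fact, established just before the statement, that $\{\overline{\mu_0},\ldots,\overline{\mu_{s-1}}\}$ is a basis of $\mathbb{F}_p^s$. Write $m=\frac{s-1}{2}$ and $U=\langle\overline{\mu_1},\ldots,\overline{\mu_m}\rangle_{\mathbb{F}_p}$.

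\emph{Step 1: $U\subseteq W^{\perp}$.} By bilinearity it suffices to check $\overline{\mu_i}\cdot\overline{\mu_j}=0$ for $0\le i\le m$ and $1\le j\le m$. For such indices we have $1\le i+j\le 2m=s-1$, so $i+j\not\equiv 0\pmod s$. Equation (\ref{eq-1}) then gives $\overline{\mu_i}\cdot\overline{\mu_j}=0$.

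It is worth recording why (\ref{eq-1}) holds. Since $\overline{\alpha}$ has multiplicative order exactly $s$ in $\mathbb{F}_p^*$, the condition $s\nmid (i+j)$ gives $\overline{\alpha}^{\,i+j}\neq \overline{1}$. The geometric sum $\sum_{k=0}^{s-1}(\overline{\alpha}^{\,i+j})^k$ therefore equals $\frac{(\overline{\alpha}^{\,i+j})^s-1}{\overline{\alpha}^{\,i+j}-1}=0$.

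\emph{Step 2: dimensions.} The vectors $\overline{\mu_0},\ldots,\overline{\mu_{s-1}}$ are linearly independent, since they form a basis of $\mathbb{F}_p^s$. Hence $\dim W=m+1=\frac{s+1}{2}$ and $\dim U=m=\frac{s-1}{2}$.

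The standard dot product on $\mathbb{F}_p^s$ is nondegenerate. The map $\mathbf{x}\mapsto(\mathbf{x}\cdot\overline{\mu_0},\ldots,\mathbf{x}\cdot\overline{\mu_m})$ is given by a matrix with linearly independent rows, so it is surjective onto $\mathbb{F}_p^{m+1}$. Its kernel is $W^{\perp}$, so
\[ \dim W^{\perp}=s-\dim W=\frac{s-1}{2}=\dim U. \]

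\emph{Step 3: conclusion.} Since $U\subseteq W^{\perp}$ and the two subspaces have the same finite dimension, $U=W^{\perp}$, which is the statement of the proposition.

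The only genuinely delicate point is Step 1, namely confirming that the index range $i+j\in\{1,\ldots,s-1\}$ never hits a multiple of $s$. This is exactly where the oddness of $s$ enters: it makes $2m=s-1<s$. The remaining steps are routine linear algebra over $\mathbb{F}_p$.
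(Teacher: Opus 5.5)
Your proof is correct, but it takes a different route from the paper's.

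The paper writes ${\bf x}=\sum_{i=0}^{s-1}a_i\overline{\mu_i}$ in the basis $\{\overline{\mu_i}\}$ and uses \emph{both} cases of (\ref{eq-1}). Since $\overline{\mu_j}$ pairs nontrivially only with $\overline{\mu_{s-j}}$ (indices mod $s$), the condition ${\bf x}\cdot\overline{\mu_j}=0$ collapses to $s\,a_{s-j}=0$. Because $s\mid p-1$, $s$ is a unit in $\mathbb{F}_p$ (a point the paper leaves implicit). Hence the conditions for $j=0,1,\ldots,\frac{s-1}{2}$ force exactly $a_0=a_{\frac{s+1}{2}}=\cdots=a_{s-1}=0$. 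That one computation gives both inclusions at once and makes the pairing $\overline{\mu_j}\leftrightarrow\overline{\mu_{s-j}}$ explicit.

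You instead use only the vanishing case of (\ref{eq-1}) to get $U\subseteq W^{\perp}$. The reverse inclusion then follows from rank--nullity, $\dim W^{\perp}=s-\dim W$. This needs the linear independence of the $\overline{\mu_i}$ (a Vandermonde matrix with the distinct nodes $\overline{\alpha}^{\,i}$). It never uses the value of the nonzero pairings or the invertibility of $s$ modulo $p$. So yours is the more economical, standard linear-algebra argument, while the paper's is more hands-on and exhibits the dual-basis structure.

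Your derivation of (\ref{eq-1}) from the order of $\overline{\alpha}$ is also right. It correctly uses $\overline{\alpha}^{\,i+j}$, whereas the paper's display writes $\beta^{i+j}$.
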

\begin{proof} Let ${\bf x} = \displaystyle \sum_{i=0}^{s-1}a_i\overline{\bf \mu_i}\in \mathbb{F}_p^s$. Thus,  
\[ {\bf x}\in W^{\perp} \iff {\bf x}\cdot \overline{\bf \mu_j}=0, \ \ \mbox{for all} \ \ j=0,1,\ldots,\frac{s-1}{2}, \] that is, \[ \overline{x}\in W^{\perp} \iff \sum_{i=0}^{s-1} a_i\ \overline{\bf \mu_i}\cdot \overline{\bf\mu_j}=0, \ \ \mbox{for all} \ \ j=0,1,\ldots,\frac{s-1}{2}. \] Therefore, \[ {\bf x}\in W^{\perp}
\iff a_0s=a_{\frac{s+1}{2}}s = \ldots = a_{s-1}s =0, \] which implies that \[ a_i=0,~\mbox{for } i=0,a_{\frac{s+1}{2}},\ldots,a_{s-1}. \] So, ${\bf x}\in W^\perp$ if and only if ${\bf x}=a_1{\bf\mu_1}+\cdots+a_{\frac{s-1}{2}}{\bf \mu_{\frac{s-1}{2}}}$, which proves the result. \end{proof}

Similarly to Proposition \ref{prop-2}, we obtain the following result:

\begin{proposition}
    \label{cor-0} If $s\equiv 0\pmod 2$, the orthogonal complement of $W$ is given by $$W^{\perp} = \left<\overline{\bf \mu_1},\overline{\bf \mu_2},\ldots,\overline{\bf \mu_{\frac{s}{2}-1}}\right>.$$
\end{proposition}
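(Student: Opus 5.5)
I will mirror the proof of Proposition \ref{prop-2}. Since $\{\overline{\mu_0},\ldots,\overline{\mu_{s-1}}\}$ is a basis of $\mathbb{F}_p^s$, I write an arbitrary ${\bf x}\in\mathbb{F}_p^s$ uniquely as ${\bf x}=\sum_{i=0}^{s-1}a_i\overline{\mu_i}$ with $a_i\in\mathbb{F}_p$. The condition ${\bf x}\in W^\perp$ is equivalent to ${\bf x}\cdot\overline{\mu_j}=0$ for the generators of $W$, namely $j=0,1,\ldots,\frac{s}{2}$. By the orthogonality relations (\ref{eq-1}), $\overline{\mu_i}\cdot\overline{\mu_j}$ is nonzero only when $i\equiv -j\pmod s$. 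Hence ${\bf x}\cdot\overline{\mu_j}=s\,a_{(-j)\bmod s}$.

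Running $j$ over $0,1,\ldots,\frac{s}{2}$, the partner index $-j \bmod s$ runs over $0$ (for $j=0$), then $s-1,s-2,\ldots,\frac{s}{2}+1$ (for $j=1,\ldots,\frac{s}{2}-1$), and finally $\frac{s}{2}$ (for $j=\frac{s}{2}$, since $s/2\equiv -s/2\pmod s$). So the conditions read $s\,a_0=s\,a_{\frac{s}{2}}=s\,a_{\frac{s}{2}+1}=\cdots=s\,a_{s-1}=0$.

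The one point that needs justification, and which the odd case also uses implicitly, is that $s$ is invertible in $\mathbb{F}_p$. This holds because $s\mid p-1$ gives $1\le s<p$, so $\overline{s}\neq 0$. Therefore $a_0=a_{\frac{s}{2}}=\cdots=a_{s-1}=0$, while $a_1,\ldots,a_{\frac{s}{2}-1}$ are unconstrained. Consequently $W^\perp=\langle\overline{\mu_1},\ldots,\overline{\mu_{\frac{s}{2}-1}}\rangle$.

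As a consistency check, this subspace has dimension $\frac{s}{2}-1$. Also $\dim W=\frac{s}{2}+1$, because the $\overline{\mu_j}$ are linearly independent. Since the dot product is nondegenerate, $\dim W+\dim W^\perp=s$, which matches.

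Strictly speaking, the only delicate step is the index bookkeeping for $-j \bmod s$. In particular, the self-paired index $j=\frac{s}{2}$ must be handled correctly; it is exactly this index that removes $\overline{\mu_{s/2}}$ from the complement.
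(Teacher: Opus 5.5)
Your argument is correct and is essentially the paper's own: the paper only says this case follows ``similarly to Proposition~\ref{prop-2}'', that is, by expanding ${\bf x}=\sum_i a_i\overline{\mu_i}$ and applying the orthogonality relations (\ref{eq-1}), which is exactly what you do. Your explicit treatment of the self-paired index $j=\frac{s}{2}$ and of the invertibility of $s$ in $\mathbb{F}_p$ (from $s\mid p-1$) supplies details the paper leaves implicit.
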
 

\subsection{The $\mathbb{Z}$-modules $\mathcal{M}_p$.}\label{subsection_Mp}

Since $\mathbb{K}$ is an algebraic number field of degree $s$, the $\mathbb{Z}$-modules $\mathcal{O}_\mathbb{K}$ and $\mathbb{Z}^s$ are isomorphic. More precisely, by the results of Section \ref{sec-2}, the map
\begin{equation}\label{eq_iso}
  \phi\left(a_0,a_1,\ldots,a_{s-1}\right)=\sum_{i=0}^{s-1} a_i\theta^i(t)  
\end{equation}
defines a $\mathbb{Z}$-module isomorphism from $\mathbb{Z}^s$ to $\mathcal{O}_\mathbb{K}$. Accordingly, for simplicity, we shall write ${\bf x}:=\phi({\bf x})\in\mathcal{O}_\mathbb{K}$ for all ${\bf x}=(a_0,a_1,\ldots,a_{s-1})\in\mathbb{Z}^s$.

Firstly, consider $s \equiv 1\pmod 2$. Let
\begin{equation}\label{M_p_s_1}
    \mathcal{M}_p:=\left\{{\bf x}\in\mathcal{O}_{\mathbb{K}}:{\bf x}\cdot {\bf \mu_i}\equiv 0\pmod p, \ \forall~i=0,1,\ldots,\frac{s-1}{2}\right\}
\end{equation}
be a $\mathbb{Z}$-submodule of $\mathcal{O}_{\mathbb{K}}$. As in the previous subsection, consider $$W = \left\langle\overline{\mu_0},\overline{\mu_1},\ldots,\overline{\mu_{\frac{s-1}{2}}}\right\rangle_{\mathbb{F}_p} \subseteq\mathbb{F}_p^s$$ and its orthogonal complement $W^{\perp} = \left\langle\overline{\bf \mu_1},\overline{\bf \mu_2},\ldots,\overline{\bf \mu_{\frac{s-1}{2}}}\right\rangle_{\mathbb{F}_p}$ (Proposition \ref{prop-2}).

If ${\bf x}\in\mathcal{M}_p$, then $\overline{\bf x}\in W^{\perp}$. Since $W^{\perp}\subseteq W$, it follows that $\overline{\bf x}\cdot\overline{\bf x}=\overline{0}$, that is, ${\bf x}\cdot {\bf x}\equiv 0\pmod p$. Furthermore, \[ {\bf x}\cdot \overline{\bf x} = \sum_{i=0}^{s-1}a_i^2 . \] If ${\bf x}\in W$, then ${\bf x}\cdot {\bf \mu_0}\equiv 0\pmod p$. Thus, \[ \sum_{i=0}^{s-1}a_i\equiv 0\pmod p, \ \ \mbox{implying} \ \ \left(\sum_{i=0}^{s-1}a_i\right)^2\equiv 0\pmod p^2. \] 
Since \[ Tr_{\mathbb{K}}({\bf x}\overline{\bf x}) = p\sum_{i=0}^{s-1}a_i^2 -\frac{p-1}{s}\left(\sum_{i=0}^{s-1}a_i\right)^2 ,\] it follows that \[ Tr_{\mathbb{K}}({\bf x}\overline{\bf x}) \equiv 0 \pmod {p^2}. \] If $t=\min\{Tr_{\mathbb{K}}({\bf x}\overline{\bf x}): {\bf x}\in\mathcal{M}_p, {\bf x}\neq 0\}$, then \[ \delta(\sigma(\mathcal{M}_p)) = \frac{t^{\frac{s}{2}}}{2^sp^\frac{s-1}{2}[\mathcal{O}_{\mathbb{K}}:\mathcal{M}_p]}. \] Now, if 
\[ \sum_{i=0}^{s-1}a_i\equiv 0\pmod 2, \ \ \mbox{then} \ \ \sum_{i=0}^{s-1}a_i^2\equiv 0 \pmod 2, \] and, therefore, \[ Tr_{\mathbb{K}}({\bf x}\overline{\bf x}) \equiv 0\pmod {2p^2}. \] Thus, \begin{equation} \label{eq-2} \delta(\sigma(\mathcal{M}_p)) \geq \frac{(2p^2)^{\frac{s}{2}}}{2^sp^{\frac{s-1}{2}}[\mathcal{O}_{\mathbb{K}}:\mathcal{M}_p]}. \end{equation}

Now, consider $s \equiv 0\pmod 2$. Let
\begin{equation} \label{M_p_s_0}
   \mathcal{M}_p := \{{\bf x}\in\mathcal{O}_{\mathbb{K}}:{\bf x}\cdot {\bf \mu}_i\equiv 0 \pmod{p}, \ \ \forall i=0,1,\ldots,s/2\} 
\end{equation}
\[  \] be a $\mathbb{Z}$-submodule of $\mathcal{O}_{\mathbb{K}}$.
As in the previous subsection, consider
\[  W=\left\langle\overline{\bf \mu_0},\overline{\bf \mu_1},\ldots,\overline{\bf \mu_{\frac{s}{2}}}\right\rangle_{\mathbb{F}_p} \subseteq\mathbb{F}_p^s \]
and its orthogonal complement $W^{\perp} = \{ {\bf x} \in \mathbb{F}_p^s:{\bf x}\cdot{\bf y}=0, \ \ \mbox{for all} \ \ {\bf y}\in W\}$.

If $\overline{\bf x}\in W^{\perp}$, then $\overline{\bf x}\cdot\overline{\bf x}=\overline{0}$, that is, $\displaystyle {\bf x}\cdot {\bf x}=\sum_{i=0}^{s-1}a_i^2\equiv 0 \pmod{p}$. Thus, \[ Tr_{\mathbb{K}}({\bf x}\cdot\overline{\bf x})\equiv 0 \pmod{p^2}. \] Therefore, \begin{equation}\label{eq-s-cong1}
    \delta(\sigma(\mathcal{M}_p)) \geq \frac{(p^2)^{\frac{s}{2}}}{2^sp^{\frac{s-1}{2}}[\mathcal{O}_{\mathbb{K}}:\mathcal{M}_p]}. \end{equation}

\subsection{The index $[\mathcal{O}_{\mathbb{K}}:\mathcal{M}_p]$}

Initially, in this subsection, we present some elementary results in algebra that are important for the subsequent computation of the index $[\mathcal{O}_{\mathbb{K}}:\mathcal{M}_p]$. Their proofs are omitted, since they follow from standard results in abstract algebra.

\begin{lemma} \label{lem-2} Let $N_1$ and $N_2$ be the submodules of a $\mathbb{Z}$-module $N$. Let $\pi:N_1\rightarrow \frac{N}{N_2}$ be a homomorphism given by $\pi(x)=x+N_2$, where $x\in N_1$. Then $\ker(\pi) = N_1\cap N_2$ and $N_1/(N_1\cap N_2)$ is isomorphic to $Im(\pi)\subseteq N/N_2$. Furthermore, if $[N:N_2]<\infty$, then $[N_1:N_1\cap N_2]<\infty$ is a divisor of $[N:N_2]$.\end{lemma}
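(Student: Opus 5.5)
The plan is to argue directly from the definition of $\pi$ and then apply the first isomorphism theorem and Lagrange's theorem for abelian groups.

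\emph{Homomorphism and kernel.} First observe that $\pi$ is the composition of the inclusion $N_1\hookrightarrow N$ with the canonical projection $N\to N/N_2$. Both maps are $\mathbb{Z}$-module homomorphisms, so $\pi$ is one as well. Next compute the kernel. For $x\in N_1$ we have $\pi(x)=0+N_2$ if and only if $x\in N_2$. Hence $\ker(\pi)=\{x\in N_1: x\in N_2\}=N_1\cap N_2$.

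\emph{Isomorphism.} The first isomorphism theorem for $\mathbb{Z}$-modules now gives $N_1/(N_1\cap N_2)\cong \mathrm{Im}(\pi)$. Here $\mathrm{Im}(\pi)$ is a submodule of $N/N_2$, namely $(N_1+N_2)/N_2$.

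\emph{Index statement.} Assume $[N:N_2]<\infty$, so $N/N_2$ is a finite abelian group of order $[N:N_2]$. Then $\mathrm{Im}(\pi)$ is a subgroup of this finite group. By Lagrange's theorem its order is finite and divides $|N/N_2|=[N:N_2]$. Through the isomorphism above, $[N_1:N_1\cap N_2]=|N_1/(N_1\cap N_2)|=|\mathrm{Im}(\pi)|$. This is therefore finite and a divisor of $[N:N_2]$.

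There is no real obstacle here. The only point needing care is to identify the index $[N_1:N_1\cap N_2]$ with the order of the image subgroup, which the isomorphism provides. Lagrange's theorem then gives divisibility without any further work.
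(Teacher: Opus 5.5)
Your proof is correct and is exactly the standard argument the paper relies on: the paper omits the proof, saying it follows from standard results in abstract algebra. Those results are the first isomorphism theorem applied to $\pi$ (inclusion followed by projection), giving $N_1/(N_1\cap N_2)\cong (N_1+N_2)/N_2$, and Lagrange's theorem in the finite group $N/N_2$.
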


\begin{lemma} \label{cor-2} For a prime number $p$, if $[N:N_1]=[N:N_2]=p$ and $N_1\neq N_2$, then $[N_1:N_1\cap N_2]=p$. \end{lemma}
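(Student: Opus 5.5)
The statement to prove is Lemma \ref{cor-2}: if $[N:N_1]=[N:N_2]=p$ with $p$ prime and $N_1\neq N_2$, then $[N_1:N_1\cap N_2]=p$. The plan is to apply Lemma \ref{lem-2} to the map $\pi:N_1\rightarrow N/N_2$, $\pi(x)=x+N_2$, and use the fact that a group of prime order has no nontrivial proper subgroups.

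First, by Lemma \ref{lem-2}, $\ker(\pi)=N_1\cap N_2$ and $N_1/(N_1\cap N_2)\cong Im(\pi)$. Here $Im(\pi)$ is a subgroup of $N/N_2$, which has order $p$. By Lagrange's theorem, $|Im(\pi)|$ divides $p$, so $|Im(\pi)|\in\{1,p\}$. Equivalently, Lemma \ref{lem-2} already says that $[N_1:N_1\cap N_2]$ divides $[N:N_2]=p$.

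Next, rule out $|Im(\pi)|=1$. If the image were trivial, then $N_1\subseteq N_2$. Then, by multiplicativity of indices,
\[ p=[N:N_1]=[N:N_2]\,[N_2:N_1]=p\,[N_2:N_1], \]
which forces $[N_2:N_1]=1$, i.e. $N_1=N_2$. This contradicts the hypothesis $N_1\neq N_2$.

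Hence $|Im(\pi)|=p$, and so $[N_1:N_1\cap N_2]=|N_1/(N_1\cap N_2)|=p$. The only step requiring care is excluding the trivial image, which needs the index multiplicativity argument above; everything else follows directly from Lemma \ref{lem-2} and Lagrange's theorem.
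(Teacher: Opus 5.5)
Your proof is correct. The paper omits this proof as a standard fact, but placing the lemma immediately after Lemma \ref{lem-2} points to your route: Lemma \ref{lem-2} shows that $[N_1:N_1\cap N_2]$ divides $p$, and the value $1$ is excluded because $N_1\subseteq N_2$ together with $[N:N_1]=[N:N_2]$ would force $N_1=N_2$ by the tower law of Lemma \ref{lem-3}.
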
 

\begin{lemma} \label{lem-3} Let $N_1\subseteq N_2\subseteq N_3$ be $\mathbb{Z}$-modules. If $[N_3:N_2]$ and $[N_2:N_1]$ are finite, then $[N_3:N_1]=[N_3:N_2][N_2:N_1]$ is also finite. \end{lemma}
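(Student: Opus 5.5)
The statement to prove is Lemma~\ref{lem-3}: for $\mathbb{Z}$-modules $N_1\subseteq N_2\subseteq N_3$ with $[N_3:N_2]$ and $[N_2:N_1]$ finite, the index $[N_3:N_1]$ is finite and equals $[N_3:N_2][N_2:N_1]$. I will prove it by coset counting, as in Lagrange's theorem for abelian groups.

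First, choose coset representatives. Let $m=[N_3:N_2]$ and $n=[N_2:N_1]$. Pick $x_1,\ldots,x_m\in N_3$ with
\[
N_3=\bigsqcup_{i=1}^m (x_i+N_2),
\]
and $y_1,\ldots,y_n\in N_2$ with
\[
N_2=\bigsqcup_{j=1}^n (y_j+N_1).
\]

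Second, show that the $mn$ cosets $x_i+y_j+N_1$ cover $N_3$. Given $z\in N_3$, there is a unique $i$ with $z-x_i\in N_2$. Then there is a unique $j$ with $z-x_i-y_j\in N_1$, so $z\in x_i+y_j+N_1$.

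Third, show these cosets are pairwise distinct. Suppose $x_i+y_j+N_1=x_{i'}+y_{j'}+N_1$. Then $x_i-x_{i'}\in y_{j'}-y_j+N_1\subseteq N_2$, which forces $i=i'$. It follows that $y_j-y_{j'}\in N_1$, so $j=j'$.

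Hence $N_3/N_1$ has exactly $mn$ elements, which gives both finiteness and the product formula. An alternative route is the third isomorphism theorem: $(N_3/N_1)/(N_2/N_1)\cong N_3/N_2$, and then apply Lagrange's theorem to the finite group $N_3/N_1$. That route, however, needs finiteness of $N_3/N_1$ first, so the direct counting argument is cleaner.

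There is no real obstacle here. The only point needing care is the distinctness step, where the inclusion $N_1\subseteq N_2$ is used to pass from a congruence modulo $N_1$ to one modulo $N_2$.
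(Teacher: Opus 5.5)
Your coset-counting argument is correct and complete. The paper omits this proof as a standard fact from abstract algebra, and your argument is exactly that standard proof. One small correction to your side remark: the third-isomorphism-theorem route does not need finiteness of $N_3/N_1$ in advance. The group $N_3/N_1$ is a disjoint union of $[N_3:N_2]$ cosets of $N_2/N_1$, each of size $[N_2:N_1]$, which gives finiteness and the product formula at once.
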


\begin{lemma} \label{lem-4} Let $a_1,a_2,\ldots,a_s\in\mathbb{Z}\setminus\{0\}$. If $d=\gcd(a_1,a_2,\ldots,a_s)$, then $d\mathbb{Z}=a_1\mathbb{Z}+\cdots+a_s\mathbb{Z}$. \end{lemma}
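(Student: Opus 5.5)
The statement is Lemma~\ref{lem-4}: for nonzero integers $a_1,\ldots,a_s$ with $d=\gcd(a_1,\ldots,a_s)$, we have $d\mathbb{Z}=a_1\mathbb{Z}+\cdots+a_s\mathbb{Z}$. The plan is to prove the two inclusions separately, using only that $\mathbb{Z}$ is a principal ideal domain and the division algorithm.

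For the inclusion $a_1\mathbb{Z}+\cdots+a_s\mathbb{Z}\subseteq d\mathbb{Z}$, note that $d$ divides every $a_i$. Hence every combination $\sum a_ix_i$ with $x_i\in\mathbb{Z}$ is a multiple of $d$.

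For the reverse inclusion, set $I=a_1\mathbb{Z}+\cdots+a_s\mathbb{Z}$. This is an additive subgroup of $\mathbb{Z}$, and it is nonzero because $a_1\neq 0$.
\begin{itemize}
\item Let $e$ be the least positive element of $I$; it exists by well-ordering since $I$ contains $|a_1|>0$.
\item For any $x\in I$, divide to get $x=qe+r$ with $0\le r<e$. Then $r=x-qe\in I$, so minimality of $e$ forces $r=0$. Thus $I=e\mathbb{Z}$.
\item In particular each $a_i\in I$ is a multiple of $e$, so $e$ is a common divisor of the $a_i$ and therefore $e\le d$.
\item From the first inclusion, $e\in I\subseteq d\mathbb{Z}$, so $d\mid e$; since both are positive, $d\le e$.
\end{itemize}
Hence $e=d$ and $I=d\mathbb{Z}$.

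An alternative is induction on $s$ from the two-term Bézout identity, using $\gcd(a_1,\ldots,a_s)=\gcd(\gcd(a_1,\ldots,a_{s-1}),a_s)$. The direct argument above avoids that identity. No step is a genuine obstacle; the only point needing care is the minimality argument identifying $I$ with $e\mathbb{Z}$. In the paper this lemma will presumably be used with $p$ and coefficients of the congruence conditions, to show that some coordinate projection of $\mathcal{M}_p$ is all of $\mathbb{Z}$ or $p\mathbb{Z}$, feeding into the index computation via Lemmas~\ref{lem-2}--\ref{lem-3}.
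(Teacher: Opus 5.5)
Your proof is correct and complete. The paper omits the proof of this lemma as a standard fact, and your argument (taking the least positive element $e$ of $a_1\mathbb{Z}+\cdots+a_s\mathbb{Z}$, showing the module equals $e\mathbb{Z}$ by the division algorithm, then squeezing $e=d$) is exactly the standard argument being invoked. One correction to your closing aside, which does not affect the proof: the paper uses the lemma to identify the image of $\pi:\mathbb{Z}^s\to\mathbb{Z}/m\mathbb{Z}$ as the cyclic subgroup generated by $\overline{d}$, giving $[\mathbb{Z}^s:\ker(\pi)]=m/\gcd(m,d)$, not to compute coordinate projections of $\mathcal{M}_p$.
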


\begin{lemma} \label{cor-3} If $m\in\mathbb{Z}$ is a positive integer, then the map

\[\begin{array}{llll} \pi:&\mathbb{Z}^s&\rightarrow& \mathbb{Z}/m\mathbb{Z}\\
& (x_1,x_2,\ldots,x_s) &\mapsto & \overline{\sum_{i=1}^s x_i a_i}
\end{array} \]
is a homomorphism of $\mathbb{Z}$-modules. \end{lemma}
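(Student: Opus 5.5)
The plan is to verify directly that $\pi$ respects the two $\mathbb{Z}$-module operations. Throughout, $a_1,\ldots,a_s$ are the fixed integers from the statement. We may factor $\pi$ as the composition $\pi=q\circ\ell$, where
\[ \ell:\mathbb{Z}^s\rightarrow\mathbb{Z}, \qquad \ell(x_1,\ldots,x_s)=\sum_{i=1}^s x_ia_i, \]
and $q:\mathbb{Z}\rightarrow\mathbb{Z}/m\mathbb{Z}$ is the canonical projection $q(n)=\overline{n}$. Since a composition of $\mathbb{Z}$-module homomorphisms is again a homomorphism, it suffices to check that $\ell$ and $q$ are homomorphisms.

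For $\ell$, take ${\bf x}=(x_1,\ldots,x_s)$, ${\bf y}=(y_1,\ldots,y_s)\in\mathbb{Z}^s$ and $c\in\mathbb{Z}$. Distributivity in $\mathbb{Z}$ gives
\[ \ell({\bf x}+{\bf y})=\sum_{i=1}^s (x_i+y_i)a_i=\ell({\bf x})+\ell({\bf y}) \quad\text{and}\quad \ell(c{\bf x})=\sum_{i=1}^s (cx_i)a_i=c\,\ell({\bf x}). \]
In other words, $\ell$ is the dot product with $(a_1,\ldots,a_s)$, which is linear. For $q$, we have $\overline{n+n'}=\overline{n}+\overline{n'}$ and $\overline{cn}=c\,\overline{n}$, because addition and the $\mathbb{Z}$-action on $\mathbb{Z}/m\mathbb{Z}$ are defined on representatives. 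These operations are well defined precisely because $m\mathbb{Z}$ is a subgroup of $\mathbb{Z}$. Hence $\pi=q\circ\ell$ is a homomorphism of $\mathbb{Z}$-modules.

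There is no real obstacle here. The only point that needs a moment's care is the well-definedness of the operations on $\mathbb{Z}/m\mathbb{Z}$, which is standard.

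For the later index computation it will also be useful to record two consequences. First, the kernel of $\pi$ is $\{{\bf x}\in\mathbb{Z}^s:{\bf x}\cdot(a_1,\ldots,a_s)\equiv 0\pmod m\}$, which is exactly the shape of the modules $\mathcal{M}_{\bf v}$. Second, the image of $\pi$ is the image of $a_1\mathbb{Z}+\cdots+a_s\mathbb{Z}$ in $\mathbb{Z}/m\mathbb{Z}$. By Lemma \ref{lem-4} this is $d\mathbb{Z}$ with $d=\gcd(a_1,\ldots,a_s)$, so the image is $(d\mathbb{Z}+m\mathbb{Z})/m\mathbb{Z}$. In particular, $\pi$ is surjective when $\gcd(d,m)=1$, and the first isomorphism theorem then gives $[\mathbb{Z}^s:\ker\pi]=m$.
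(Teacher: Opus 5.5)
Your proof is correct: factoring $\pi$ as the linear functional $\mathbf{x}\mapsto\sum_{i=1}^s x_ia_i$ followed by the canonical projection $\mathbb{Z}\to\mathbb{Z}/m\mathbb{Z}$ is the standard verification the paper has in mind when it omits the proof as routine. Your extra remarks, namely the description of $\ker\pi$, the identity $\operatorname{Im}\pi=(d\mathbb{Z}+m\mathbb{Z})/m\mathbb{Z}$, and $[\mathbb{Z}^s:\ker\pi]=m$ when $\gcd(d,m)=1$, agree with the discussion the paper gives immediately after the lemma.
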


From Lemma \ref{cor-3} and the Isomorphism Theorem, it follows that \[ \mathbb{Z}^s/\ker(\pi) \simeq Im(\pi)\subseteq \mathbb{Z}/m\mathbb{Z}. \] Since $d=\gcd(a_1,a_2,\ldots,a_s)$, it follows from Lemma \ref{lem-4} that $\sum_{i=1}^s x_ia_i \in d\mathbb{Z}$. Thus, \[ Im(\pi) =\{\overline{0},\overline{d},\overline{2d},\ldots,\overline{(k-1)d}\}, \] where $kd$ is minimum positive integer divisible by $m$. Thus, $k=\frac{m}{gcd(m,d)}$. This implies that \[ [\mathbb{Z}^s:\ker(\pi)] = \frac{m}{gcd(m,d)}. \] If $\gcd(m,d)=1$, then $\pi$ is surjective and
\begin{multline}\label{eq-nova}
   \ker(\pi)=\left\{(x_1,x_2,\ldots,x_n)\in\mathbb{Z}^s:\displaystyle \sum_{i=1}^n x_ia_i\equiv 0 \pmod m\right\}\\
   = \{{\bf u}\in\mathbb{Z}^n:{\bf u}\cdot{\bf v}\equiv 0 \pmod m\},
\end{multline}
where ${\bf v}=(a_1,a_2,\ldots,a_n)\in\mathbb{Z}^n$. Hence, in this case, $\mathbb{Z}^n/\ker(\pi)$ is isomorphic to $\mathbb{Z}/m\mathbb{Z}$, which implies $[\mathbb{Z}^n:\ker(\pi)]=m$.
For each $i=0,1,\ldots,s-1$, consider ${\bf \mu_i}= \left(1,\alpha^i,\alpha^{2i},\ldots,\alpha^{(s-1)i}\right) \in \mathbb{Z}^s$ as in Equation \ref{eq-0}. Let \[ \mathcal{N}_i=\{{\bf x}\in\mathbb{Z}^s: {\bf x}\cdot{\bf \mu_i}\equiv 0 \pmod p\} \] be a $\mathbb{Z}$-submodule of $\mathbb{Z}^s$. Since $\gcd(1,\alpha^i,\alpha^{2i},\ldots,\alpha^{(s-1)i})$, it follows from (\ref{eq-nova}) that $$[\mathbb{Z}^s:\mathcal{N}_i]=p.$$

\begin{lemma} \label{lem-5} $\mathcal{N}_i\neq \mathcal{N}_j$ for each pair $i\neq j\in\{0,1,\ldots,s-1\}$. \end{lemma}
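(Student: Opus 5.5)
To prove Lemma~\ref{lem-5} I will exhibit an explicit vector lying in $\mathcal{N}_i$ but not in $\mathcal{N}_j$. Everything is done modulo $p$. Membership ${\bf x}\in\mathcal{N}_i$ depends only on $\overline{\bf x}\in\mathbb{F}_p^s$: it holds exactly when $\overline{\bf x}\cdot\overline{\mu_i}=\overline{0}$. So it suffices to find $\overline{\bf y}\in\mathbb{F}_p^s$ with $\overline{\bf y}\cdot\overline{\mu_i}=0$ and $\overline{\bf y}\cdot\overline{\mu_j}\neq 0$, and then lift it to any integer vector ${\bf y}\in\mathbb{Z}^s$.

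The natural candidate is another element of the basis $\{\overline{\mu_0},\ldots,\overline{\mu_{s-1}}\}$, using the orthogonality relations (\ref{eq-1}). Recall that $\overline{\mu_k}\cdot\overline{\mu_l}=s$ if $k+l\equiv 0\pmod s$, and $0$ otherwise. Here $s\neq 0$ in $\mathbb{F}_p$, because $s\mid p-1$ gives $1\le s<p$. For indices in $\{0,\ldots,s-1\}$, let $k$ be the unique index with $k\equiv -j\pmod s$, and put ${\bf y}=\mu_k$. Then $\overline{\mu_k}\cdot\overline{\mu_j}=s\neq 0$, so $\mu_k\notin\mathcal{N}_j$. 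Also $\overline{\mu_k}\cdot\overline{\mu_i}=0$, since $k+i\equiv i-j\not\equiv 0\pmod s$; the reason is that $i\neq j$ and both lie in $\{0,\ldots,s-1\}$. Hence $\mu_k\in\mathcal{N}_i\setminus\mathcal{N}_j$, and therefore $\mathcal{N}_i\neq\mathcal{N}_j$.

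The only step needing care is the justification of (\ref{eq-1}) in the case $i+j\not\equiv 0\pmod s$. There $\overline{\alpha}^{\,i+j}\neq 1$, because $\overline{\alpha}$ has exact order $s$ in $\mathbb{F}_p^*$ (as $\alpha=\beta^{(p-1)/s}$ with $\beta$ primitive). The geometric sum then equals $\frac{(\overline{\alpha}^{\,i+j})^s-1}{\overline{\alpha}^{\,i+j}-1}=0$. I will recall this explicitly, since the displayed formula in the paper writes $\beta^{i+j}$ where $\alpha^{i+j}$ is meant.

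An alternative route avoids choosing $k$. Since $[\mathbb{Z}^s:\mathcal{N}_i]=p$ and $\mathcal{N}_i\supseteq p\mathbb{Z}^s$, the module $\mathcal{N}_i$ is determined by the line $\langle\overline{\mu_i}\rangle^{\perp}$ in $\mathbb{F}_p^s$. Equality $\mathcal{N}_i=\mathcal{N}_j$ would force $\overline{\mu_i}$ and $\overline{\mu_j}$ to be proportional, which contradicts the linear independence established via the Vandermonde determinant. I would mention this as a remark, but the explicit witness above is the cleaner argument.
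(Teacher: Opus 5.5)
This is correct and is essentially the paper's own proof: the paper likewise takes $\mu_{s-j}$ (your $\mu_k$ with $k\equiv -j\pmod s$) as a witness lying in $\mathcal{N}_i$ but not in $\mathcal{N}_j$ via the orthogonality relations (\ref{eq-1}). Your reduction of $k$ to $\{0,\ldots,s-1\}$ and your check that $s\not\equiv 0\pmod p$ make explicit two points the paper leaves implicit (for $j=0$ the paper's $\mu_s$ agrees with $\mu_0$ only modulo $p$). One small slip in your closing remark: $\langle\overline{\mu_i}\rangle^{\perp}$ is a hyperplane rather than a line, though the argument there remains sound.
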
 \begin{proof} Suppose $i\neq j$, for $i,j=0,1,\ldots,s-1$. From (\ref{eq-1}), it follows that ${\bf \mu_{s-j}}\cdot{\bf \mu_j}=s$, thus ${\bf \mu_{s-j}}\notin \mathcal{N}_j$ for all $j$. Moreover, ${\bf \mu_{s-j}}\cdot{\bf \mu_i}=0$, which implies ${\bf \mu_{s-j}}\in\mathcal{N}_i$. Therefore, $\mathcal{N}_i\not\subseteq \mathcal{N}_j$. Analogously, $\mathcal{N}_j\not\subseteq \mathcal{N}_i$. Therefore, $\mathcal{N}_i\neq \mathcal{N}_j$. \end{proof}

For each $i=1,2,\ldots,s-2$, it follows from Lemma \ref{cor-2} that
\[[\mathcal{N}_0\cap\mathcal{N}_1\cap\cdots\cap\mathcal{N}_{i-1}:\mathcal{N}_0\cap\mathcal{N}_1\cap\cdots\cap\mathcal{N}_{i}]=p.  \]
From this and Lemma \ref{lem-3}, 
\begin{equation}\label{eq-nova2}
   [\mathbb{Z}^s:\mathcal{N}_0\cap\cdots\cap\mathcal{N}_{i}]=p^i,
\end{equation}
for $i=1,2,\ldots,s-1$.

Returning to the modules $\mathcal{M}_p$ defined in Subsection \ref{subsection_Mp}, the previous equation leads to the following theorem:

\begin{theorem} \label{prop-3} Consider the $\mathbb{Z}$-modules $\mathcal{M}_p$ defined in (\ref{M_p_s_1}) for the case $s\equiv 1\pmod 2$ and in (\ref{M_p_s_0}) for the case $s\equiv 0\pmod 2$. Thus, $$[\mathcal{O}_{\mathbb{K}}:\mathcal{M}_p]=\left\{\begin{matrix}
   p^{\frac{s+1}{2}}  &  \mbox{if}~s\equiv 1\pmod 2,\\
   p^{\frac{s+2}{2}}  &  \mbox{if}~s\equiv 0\pmod 2. \end{matrix}\right.$$ 
\end{theorem}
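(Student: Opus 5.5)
Under the isomorphism $\phi$ of (\ref{eq_iso}) we identify $\mathcal{O}_{\mathbb{K}}$ with $\mathbb{Z}^s$, so the index to compute is $[\mathbb{Z}^s:\mathcal{M}_p]$. Put $k=\frac{s-1}{2}$ if $s$ is odd and $k=\frac{s}{2}$ if $s$ is even. By (\ref{M_p_s_1}) and (\ref{M_p_s_0}),
\[ \mathcal{M}_p=\mathcal{N}_0\cap\mathcal{N}_1\cap\cdots\cap\mathcal{N}_k . \]
Since $k+1=\frac{s+1}{2}$ in the odd case and $k+1=\frac{s+2}{2}$ in the even case, the statement is equivalent to the single claim $[\mathbb{Z}^s:\mathcal{M}_p]=p^{k+1}$. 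In particular, the count of defining conditions is $k+1$, not $k$.

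Rather than iterating Lemma \ref{cor-2}, I would prove this claim directly. Define the $\mathbb{Z}$-module homomorphism
\[ \Psi:\mathbb{Z}^s\rightarrow\mathbb{F}_p^{k+1},\qquad \Psi({\bf x})=\left(\overline{{\bf x}\cdot\mu_0},\ \overline{{\bf x}\cdot\mu_1},\ \ldots,\ \overline{{\bf x}\cdot\mu_k}\right). \]
Its kernel is exactly $\mathcal{M}_p$, so $\mathbb{Z}^s/\mathcal{M}_p\simeq \mathrm{Im}(\Psi)$.

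The map $\Psi$ factors through reduction modulo $p$, namely $\mathbb{Z}^s\rightarrow\mathbb{F}_p^s$ (which is surjective), followed by the $\mathbb{F}_p$-linear map $\overline{\bf x}\mapsto (\overline{\bf x}\cdot\overline{\mu_i})_{0\le i\le k}$. This linear map has matrix given by the first $k+1$ rows of the Vandermonde matrix $M$.

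The key step is therefore that $\overline{\mu_0},\ldots,\overline{\mu_k}$ are linearly independent in $\mathbb{F}_p^s$.
\begin{itemize}
\item Since $\overline{\alpha}$ has multiplicative order exactly $s$ in $\mathbb{F}_p^*$, the elements $1,\overline{\alpha},\ldots,\overline{\alpha}^{s-1}$ are pairwise distinct.
\item Hence $\det(M)=\prod_{0\le i<j<s}(\overline{\alpha}^i-\overline{\alpha}^j)\neq 0$, and all rows of $M$ are linearly independent.
\item Alternatively, one can use the orthogonality relations (\ref{eq-1}). Pairing a relation $\sum_{i\le k}c_i\overline{\mu_i}=0$ with $\overline{\mu_{s-j}}$ isolates $c_j s$. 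Here $s\neq 0$ in $\mathbb{F}_p$ because $s\mid p-1$.
\end{itemize}
Since $k+1\le s$ in both cases, these $k+1$ rows have full rank. The linear map is then surjective, so $\mathrm{Im}(\Psi)=\mathbb{F}_p^{k+1}$ and $[\mathbb{Z}^s:\mathcal{M}_p]=p^{k+1}$. This gives the two stated values.

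The main obstacle is bookkeeping rather than substance. The chain argument preceding the theorem, via Lemma \ref{cor-2}, only shows that consecutive intersections have index $p$ when the new module differs pairwise from the previous ones; that is not sufficient in general. Moreover, (\ref{eq-nova2}) as written appears to be off by one, since $[\mathbb{Z}^s:\mathcal{N}_0]=p$ already. The linear-independence argument avoids both issues.

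If one prefers to stay within the paper's lemmas, one can instead show that each step $\mathcal{N}_0\cap\cdots\cap\mathcal{N}_{i-1}\supsetneq\mathcal{N}_0\cap\cdots\cap\mathcal{N}_i$ is strict with index dividing $p$, so it equals $p$. Strictness holds because a lift of $\overline{\mu_{s-i}}$ lies in $\mathcal{N}_j$ for all $j<i$ but not in $\mathcal{N}_i$, by (\ref{eq-1}). The index divides $p$ by Lemma \ref{lem-2}, since $[\mathbb{Z}^s:\mathcal{N}_i]=p$. Then Lemma \ref{lem-3} multiplies these indices to give $p^{k+1}$.
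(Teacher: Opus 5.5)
Your proof is correct, and your main argument takes a different route from the paper's. The paper uses iterated index counting. Each $\mathcal{N}_i$ has index $p$, the $\mathcal{N}_i$ are pairwise distinct (Lemma~\ref{lem-5}), and Lemmas~\ref{cor-2} and~\ref{lem-3} are invoked to make each successive intersection drop by a factor $p$. This gives (\ref{eq-nova2}), from which the theorem is read off.

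You instead identify $\mathbb{Z}^s/\mathcal{M}_p$ in one step with the image of $\Psi$, and get surjectivity from the rank of $k+1$ rows of the Vandermonde matrix. Equivalently, $\mathcal{M}_p$ is the full preimage of $W^{\perp}$ under reduction mod $p$, so its index is $p^{s-\dim W^{\perp}}=p^{\dim W}$. This ties the index directly to Propositions~\ref{prop-2} and~\ref{cor-0}, which the paper proves but does not use for the index.

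Your criticism of the chain is accurate. Lemma~\ref{cor-2} requires both modules to have index $p$ in a common ambient module, which fails for $\mathcal{N}_0\cap\cdots\cap\mathcal{N}_{i-1}$ once $i\ge 2$. Pairwise distinctness also does not force $\mathcal{N}_0\cap\cdots\cap\mathcal{N}_{i-1}\not\subseteq\mathcal{N}_i$: the preimages in $\mathbb{Z}^2$ of three distinct lines in $\mathbb{F}_p^2$ already fail. Finally, (\ref{eq-nova2}) should read $p^{i+1}$, which is what the stated values correspond to at $i=k$.

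The missing non-containment is implicit in the proof of Lemma~\ref{lem-5}. For $1\le i\le s-1$, the vector $\mu_{s-i}$ lies in every $\mathcal{N}_j$ with $j\neq i$ but not in $\mathcal{N}_i$, and your fallback argument supplies exactly this repair.

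The trade-off is this. The chain approach stays within elementary index bookkeeping for $\mathbb{Z}$-modules, but it needs that stronger witness at every step. Your linear-algebra version needs only the nonvanishing of the Vandermonde determinant, and it makes the exponent visibly equal to the number of independent congruences, so the off-by-one cannot creep in.
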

\begin{proof}
As pointed in (\ref{eq_iso}), $\mathbb{Z}^s$ is a $\mathbb{Z}$-module isomorphic to $\mathcal{O}_\mathbb{K}$. By (\ref{eq-nova}), $\mathcal{M}_p$ is isomorphic to $\ker(\pi)$ as a $\mathbb{Z}$-module. Hence, $\mathbb{Z}^s/\ker(\pi)\simeq \mathcal{O}_\mathbb{K}/\mathcal{M}_p$. Therefore, $[\mathcal{O}_{\mathbb{K}}:\mathcal{M}_p]=[\mathbb{Z}^s:\ker(\pi)]$. The desired result now follows from (\ref{eq-nova2}).
\end{proof}

In both cases on $s$, we define a new $\mathbb{Z}$-submodule of $\mathcal{O}_\mathbb{K}$ given by
\begin{equation}\label{moduloMnew}
\tilde{\mathcal{M}}_p=\left\{{\bf x}\in\mathcal{M}_p:\displaystyle \sum_{i=0}^{s-1}a_i\equiv 0\pmod 2\right\}\subseteq\mathcal{M}_p.
\end{equation}



\begin{corollary}\label{corol_indice} The index of $\tilde{\mathcal{M}}_p$ in $\mathcal{O}_\mathbb{K}$ is given by
$$[\mathcal{O}_{\mathbb{K}}:\tilde{\mathcal{M}}_p]=\left\{\begin{matrix}
   2p^{\frac{s+1}{2}}  &  \mbox{if}~s\equiv 1\pmod 2,\\
   2p^{\frac{s+2}{2}}  &  \mbox{if}~s\equiv 0\pmod 2. \end{matrix}\right.$$ 
\end{corollary}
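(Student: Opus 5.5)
The plan is to apply Lemma \ref{lem-3} to the chain $\tilde{\mathcal{M}}_p\subseteq\mathcal{M}_p\subseteq\mathcal{O}_{\mathbb{K}}$, which gives
\[ [\mathcal{O}_{\mathbb{K}}:\tilde{\mathcal{M}}_p]=[\mathcal{O}_{\mathbb{K}}:\mathcal{M}_p]\,[\mathcal{M}_p:\tilde{\mathcal{M}}_p]. \]
The first factor is known from Theorem \ref{prop-3}. The whole task therefore reduces to showing $[\mathcal{M}_p:\tilde{\mathcal{M}}_p]=2$.

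For this I will use the homomorphism $\psi:\mathcal{M}_p\to\mathbb{Z}/2\mathbb{Z}$, ${\bf x}=(a_0,\ldots,a_{s-1})\mapsto \overline{\sum_i a_i}$. This is just the restriction to $\mathcal{M}_p$ of the map of Lemma \ref{cor-3} with $m=2$ and ${\bf v}=(1,\ldots,1)$. By definition (\ref{moduloMnew}), $\ker\psi=\tilde{\mathcal{M}}_p$, so $\mathcal{M}_p/\tilde{\mathcal{M}}_p\simeq \mathrm{Im}(\psi)\subseteq\mathbb{Z}/2\mathbb{Z}$. Hence the index is $1$ or $2$, and it equals $2$ exactly when $\psi$ is surjective.

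The main obstacle is surjectivity: I must exhibit some ${\bf x}\in\mathcal{M}_p$ with odd coordinate sum. Since $p\mathbb{Z}^s\subseteq\mathcal{M}_p$, the vector $p\,e_0=(p,0,\ldots,0)$ lies in $\mathcal{M}_p$. Its coordinate sum is $p$, which is odd whenever $p$ is odd. This is the case of interest, since $s\geq 2$ forces $p-1$ to have a nontrivial divisor. The degenerate case $p=2$ gives $s=1$ and can be checked by hand or excluded. Alternatively, one could phrase this via Lemma \ref{lem-2}: take $N=\mathbb{Z}^s$, $N_1=\mathcal{M}_p$, and $N_2$ the even-sum sublattice of index $2$. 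Then $\mathcal{M}_p\not\subseteq N_2$, so $[N_1:N_1\cap N_2]$ is a divisor of $2$ different from $1$, i.e.\ equal to $2$.

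Combining this with Theorem \ref{prop-3}, I get $2p^{\frac{s+1}{2}}$ for $s$ odd and $2p^{\frac{s+2}{2}}$ for $s$ even, as claimed.
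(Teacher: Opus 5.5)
Your argument is correct and is essentially the paper's proof: Lemma~\ref{lem-3} on the chain $\tilde{\mathcal{M}}_p\subseteq\mathcal{M}_p\subseteq\mathcal{O}_{\mathbb{K}}$, Theorem~\ref{prop-3} for $[\mathcal{O}_{\mathbb{K}}:\mathcal{M}_p]$, and the parity map $\psi$ for $[\mathcal{M}_p:\tilde{\mathcal{M}}_p]=2$. Your $\psi$ is the same as the paper's ${\bf x}\mapsto\overline{\phi^{-1}({\bf x})\cdot{\bf \mu_0}}$, since ${\bf \mu_0}=(1,\ldots,1)$, and you usefully supply the surjectivity witness $p\,e_0\in\mathcal{M}_p$, which the paper only asserts.

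One correction: the case $p=2$ cannot be ``checked by hand'' in your favor. There $s=1$ and $\mathcal{M}_2=\tilde{\mathcal{M}}_2$ is the even integers, so the index is $2$ rather than $4$, and that case must simply be excluded.
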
 \begin{proof} Let $\psi:\mathcal{M}_p\rightarrow \mathbb{Z}/2\mathbb{Z}$ be the surjective homomorphism of groups given by $\psi({\bf x})=\overline{\phi^{-1}({\bf x})\cdot{\bf \mu_0}}$ for each ${\bf x}\in\mathcal{M}_p$. Since $\ker(\psi) = \tilde{\mathcal{M}}_p$, we have that $\mathcal{M}_p/\tilde{\mathcal{M}}_p$ is isomorphic to $\mathbb{Z}/2\mathbb{Z}$, which implies $[\mathcal{M}_p:\tilde{\mathcal{M}}_p]=2$. Finally, the result follows from Theorem \ref{prop-3} and Lemma \ref{lem-3}.\end{proof}

\subsection{Center density bounds and examples} Firstly, consider $s\equiv 1\pmod 2$. By (\ref{eq-2}) and Corollary \ref{corol_indice}, the center density of the lattice obtained as image of the $\mathbb{Z}$-submodule $\tilde{\mathcal{M}}_p\subseteq \mathcal{M}_p\subseteq\mathcal{O}_\mathbb{K}$ via the canonical embedding is lower bounded by
\begin{equation}\label{conclusao_s1}
\delta(\sigma(\tilde{\mathcal{M}}_p)) \geq \frac{(2p^2)^{\frac{s}{2}}}{2^sp^{\frac{s-1}{2}}2p^{\frac{s+1}{2}}} = \frac{1}{2^{\frac{s+2}{2}}}.
\end{equation}

\begin{example} Consider $\mathbb{L}=\mathbb{Q}(\zeta_p)$ for some prime number $p$. If $s=3$ and $p \equiv 1 \pmod 3$,  \[ \delta(\sigma(\tilde{\mathcal{M}}_p)) = \frac{1}{4\sqrt{2}}. \]
Since the optimal center density in $\mathbb{R}^3$ is $\frac{1}{4\sqrt{2}}$, then the equality is attained in the inequality above. This means that $\sigma(\tilde{\mathcal{M}}_p)$ is the densest lattice in dimension 3. 
For example, consider $p=7$. In this case, $\mathbb{K} = \mathbb{Q}(t)$ is the maximal real subfield of $\mathbb{Q}(\zeta_7)$, where $t = \zeta_7 + \zeta_7^{-1}$. The canonical embedding $\sigma: \mathbb{K} \to \mathbb{R}^3$ maps the submodule $\tilde{\mathcal{M}}_p \subset \mathcal{O}_\mathbb{K}$ to the lattice $\sigma(\tilde{\mathcal{M}}_p) \subset \mathbb{R}^3$ with generator matrix $A$ given by
\[
A = \begin{pmatrix}
14 t_1 & 7(t_1 + t_2) & 2t_1 - 3t_2 + t_3 \\
14 t_2 & 7(t_2 + t_3) & 2t_2 - 3t_3 + t_1 \\
14 t_3 & 7(t_3 + t_1) & 2t_3 - 3t_1 + t_2
\end{pmatrix},
\]
where $t_1, t_2, t_3$ are the algebraic conjugates of $t$ (the roots of $x^3 + x^2 - 2x - 1 = 0$), explicitly given by
\[
t_1 = 2\cos\left(\frac{2\pi}{7}\right), \quad t_2 = 2\cos\left(\frac{4\pi}{7}\right), \quad \text{and} \quad t_3 = 2\cos\left(\frac{6\pi}{7}\right).
\]
Notice that $A = B \cdot C$, where $C$ is the coefficient matrix of the $\mathbb{Z}$-basis of $\tilde{\mathcal{M}}_p$ relative to $\{t, \theta(t), \theta^2(t)\}$, and $B$ is a generator matrix of $\mathcal{O}_K$:
\[
B = \begin{pmatrix}
t_1 & t_2 & t_3 \\
t_2 & t_3 & t_1 \\
t_3 & t_1 & t_2
\end{pmatrix}
\quad \text{and} \quad
C = \begin{pmatrix}
14 & 7 & 2 \\
0 & 7 & -3 \\
0 & 0 & 1
\end{pmatrix}.
\]
\end{example} 

\begin{example}
If $s=5$ and $p \equiv 1 \pmod 5$, it is possible to prove that the equality is also attained, \[ \delta(\sigma(\tilde{\mathcal{M}}_p)) = \frac{1}{8\sqrt{2}}, \] 
which is the center density of the best known lattice in dimension 5. In this case, $\mathbb{K} = \mathbb{Q}(\zeta_{11} + \zeta_{11}^{-1})$ is the maximal real subfield of the cyclotomic field $\mathbb{Q}(\zeta_{11})$. The Galois conjugates $t_1, t_2, t_3, t_4, t_5$ forming an integral basis of $\mathbb{K}$ over $\mathbb{Q}$ are given by
\[
t_i = 2 \cos\left( \frac{2^i \pi}{11} \right), \quad \text{for } i \in \{1, 2, 3, 4, 5\}.
\]
Under the canonical embedding $\sigma: K \to \mathbb{R}^5$, the generator matrix of the lattice $\sigma(\tilde{\mathcal{M}}_p)$ is given by the matrix product
\[
A = \begin{pmatrix}
t_1 & t_2 & t_3 & t_4 & t_5 \\
t_2 & t_3 & t_4 & t_5 & t_1 \\
t_3 & t_4 & t_5 & t_1 & t_2 \\
t_4 & t_5 & t_1 & t_2 & t_3 \\
t_5 & t_1 & t_2 & t_3 & t_4
\end{pmatrix}
\begin{pmatrix}
22 & 11 & 11 & 13 & 9 \\
0 & 11 & 0 & 7 & 6 \\
0 & 0 & 11 & 1 & 6 \\
0 & 0 & 0 & 1 & 0 \\
0 & 0 & 0 & 0 & 1
\end{pmatrix}
\]
\end{example} 



Given an odd integer $s$ and an odd prime $p$ satisfying $p\equiv 1 \pmod{s}$, there exists a unique cyclic extension $\mathbb{K}$ of $\mathbb{Q}$ of degree $s$ contained in $\mathbb{Q}(\zeta_p)$, by the Galois Correspondence. Consequently, the lattice $\sigma(\tilde{\mathcal{M}}_p)$ is well defined. Furthermore, by Dirichlet's Theorem on arithmetic progressions \cite[p.~253]{ribenboim}, there exist infinitely many odd primes $p$ satisfying $p\equiv 1 \pmod{s}$. Hence, for each odd dimension $s$, we obtain an infinite family of lattices $\{\sigma(\tilde{\mathcal{M}}_p)\}$ parametrized by the odd primes $p$ such that $p\equiv 1 \pmod{s}$.

Now consider $s\equiv 0\pmod{2}$. From Equation (\ref{eq-s-cong1}) and Corollary \ref{corol_indice}, \[ \delta(\sigma(\mathcal{M}_p)) \geq \frac{(p^2)^{\frac{s}{2}}}{2^sp^{\frac{s-1}{2}}p^{\frac{s+2}{2}}} = \frac{1}{2^s\sqrt{p}}. \] 
If $\frac{p-1}{s}$ is odd, then the trace form satisfies \[ Tr_{\mathbb{K}}({\bf x} \overline{\bf x}) = p\sum_{i=0}^{s-1}a_i^2-\frac{p-1}{s}\left(\sum_{i=0}^{s-1}a_i\right)^2 \equiv 0 \pmod{2}, \]
implying that its minimum is $\geq 2p^2$. Thus, \[ \delta(\sigma(\mathcal{M}_p)) \geq \frac{(2p^2)^{\frac{s}{2}}}{2^sp^{\frac{s-1}{2}}p^{\frac{s+2}{2}}} = \frac{1}{2^{s/2}\sqrt{p}}. \]

\begin{example}
    If $p=3$ and $s=2$, then \[ \delta(\sigma(\mathcal{M}_p)) \geq \frac{1}{2\sqrt{3}}.\] Since the optimal center density in $\mathbb{R}^2$ is $\frac{1}{2\sqrt{3}}$, which is provided by the hexagonal lattice $\Lambda_{hex}$, then $\sigma(\mathcal{M}_p)$ is the densest lattice in dimension $2$. In fact, in this case, $\mathbb{K} = \mathbb{Q}(\zeta_3)$ is the cyclotomic field of conductor $3$. The module $\mathcal{M}_p \subset \mathcal{O}_\mathbb{K}$ is given by the ideal $\mathcal{M}_p = 3\mathcal{O}_\mathbb{K}$. A generator matrix $A$ for the lattice $\Lambda = \sigma(\mathcal{M}_p)$ is given by
\[
A = \begin{pmatrix}
3 & -\dfrac{3}{2} \\[6pt]
0 & \dfrac{3\sqrt{3}}{2}
\end{pmatrix}.
\]
\end{example}

The examples above demonstrate that for specific low dimensions ($s = 2, 3, 5$), the constructed lattices attain the maximum or best known center densities in their respective dimensions. In general odd dimensions $s$, Dirichlet's Theorem on arithmetic progressions guarantees the existence of infinitely many primes $p \equiv 1 \pmod s$, yielding infinite parametric families of lattices $\{\sigma(\tilde{\mathcal{M}}_p)\}$ associated with each dimension $s$.

\section{Conclusion}

In this paper, we introduced a systematic approach to constructing algebraic lattices from subfields $\mathbb{K}$ of the cyclotomic field $\mathbb{Q}(\zeta_p)$. The primary theoretical contribution lies in defining the submodules $\mathcal{M}_p$ and $\tilde{\mathcal{M}}_p$ via orthogonality conditions over the finite field $\mathbb{F}_p$ and precisely computing their algebraic indices $[\mathcal{O}_{\mathbb{K}} : \mathcal{M}_p]$ and $[\mathcal{O}_{\mathbb{K}} : \tilde{\mathcal{M}}_p]$. By restricting the canonical trace form to these modules, we established lower bounds for their center densities in general dimensions $s$.  It is important to emphasize the scope and limitations of our results. While our algebraic construction recovers the densest lattice packings in dimensions 2 and 3, as well as the best known packing density in dimension 5, the bounds derived for general dimensions $s > 5$ serve primarily as theoretical lower bounds and construction methods.

The present work does not establish the optimality of these lattices in arbitrary dimensions, nor does it analyze the center density of the lattices $\sigma(\mathcal{M}_p)$ or $\sigma(\tilde{\mathcal{M}}_p)$ for $s \notin \{2, 3, 5\}$. These questions are left for future work. Moreover, future research may proceed in several promising directions, such as evaluating the performance of the generated lattice constellations over communication channels (e.g., Rayleigh fading and AWGN channels) by exploiting the algebraic diversity inherent to cyclotomic subfields, and extending this orthogonality-based module construction to subfields of cyclotomic fields $\mathbb{Q}(\zeta_m)$ where $m$ is a composite integer or a prime power $p^r$.

 \end{document}